\newtheorem{theorem}{Theorem}[section]
\newtheorem{corollary}[theorem]{Corollary}
\newtheorem{lemma}[theorem]{Lemma}
\newtheorem{proposition}[theorem]{Proposition}
\newtheorem*{theorem*}{Theorem}
\newtheorem*{lemma*}{Lemma}
\newtheorem*{remark*}{Remark}
\newtheorem*{definition*}{Definition}
\newtheorem*{proposition*}{Proposition}
\newtheorem*{corollary*}{Corollary}
\numberwithin{equation}{section}
\newcommand{\real}{\mathbb{R}}
\let\ced=\c         % cedilla
\def\qed{\,\unskip\kern 6pt \penalty 500
\raise -2pt\hbox{\vrule \vbox to8pt{\hrule width 6pt
\vfill\hrule}\vrule}\par}
\definecolor{darkblue}{rgb}{0.05, .05, .65}
\definecolor{darkgreen}{rgb}{0.1, .65, .1}
\definecolor{darkred}{rgb}{0.8,0,0}
\newcommand{\beqn}{\begin{equation}}
\newcommand{\eeqn}{\end{equation}}
\newcommand{\bear}{\begin{eqnarray}}
\newcommand{\eear}{\end{eqnarray}}
\newcommand{\bean}{\begin{eqnarray*}}
\newcommand{\eean}{\end{eqnarray*}}
\begin{document}
%%%%%%%%%%%%%%%%%%%%%%%%%%%%%%%%%%%%%%%%%%%%%%%%%

%%%%%%%%%%%%%%%%%%%%%%%%%%%%%%%%%%%%%%%%%%%%%%%%%
\title{\huge \bf Self-similar solutions preventing finite time blow-up for reaction-diffusion equations with singular potential}

\author{
\Large Razvan Gabriel Iagar\,\footnote{Departamento de Matem\'{a}tica
Aplicada, Ciencia e Ingenieria de los Materiales y Tecnologia
Electr\'onica, Universidad Rey Juan Carlos, M\'{o}stoles,
28933, Madrid, Spain, \textit{e-mail:} razvan.iagar@urjc.es},\\
[4pt] \Large Ana Isabel Mu\~{n}oz\,\footnote{Departamento de Matem\'{a}tica
Aplicada, Ciencia e Ingenieria de los Materiales y Tecnologia
Electr\'onica, Universidad Rey Juan Carlos, M\'{o}stoles,
28933, Madrid, Spain, \textit{e-mail:} anaisabel.munoz@urjc.es},
\\[4pt] \Large Ariel S\'{a}nchez,\footnote{Departamento de Matem\'{a}tica
Aplicada, Ciencia e Ingenieria de los Materiales y Tecnologia
Electr\'onica, Universidad Rey Juan Carlos, M\'{o}stoles,
28933, Madrid, Spain, \textit{e-mail:} ariel.sanchez@urjc.es}\\
[4pt] }
\date{}
\maketitle

\begin{abstract}
We prove existence and uniqueness of a global in time self-similar solution growing up as $t\to\infty$ for the following reaction-diffusion equation with a singular potential
$$
u_t=\Delta u^m+|x|^{\sigma}u^p,
$$
posed in dimension $N\geq2$, with $m>1$, $\sigma\in(-2,0)$ and $1<p<1-\sigma(m-1)/2$. For the special case of dimension $N=1$, the same holds true for $\sigma\in(-1,0)$ and similar ranges for $m$ and $p$. The existence of this global solution prevents finite time blow-up even with $m>1$ and $p>1$, showing an interesting effect induced by the singular potential $|x|^{\sigma}$. This result is also applied to reaction-diffusion equations with general potentials $V(x)$ to prevent finite time blow-up via comparison.
\end{abstract}

\

\noindent {\bf Mathematics Subject Classification 2020:} 35A24, 35B44, 35C06,
35K10, 35K65.

\smallskip

\noindent {\bf Keywords and phrases:} reaction-diffusion equations, non-uniqueness, global solutions, singular potential, Hardy-type equations, self-similar solutions.

\section{Introduction}

The goal of this paper is to establish existence and uniqueness of a self-similar solution presenting grow-up as $t\to\infty$ for the following reaction-diffusion equation with a singular potential in the reaction term
\begin{equation}\label{eq1}
u_t=\Delta u^m+|x|^{\sigma}u^p,
\end{equation}
posed for $(x,t)\in\real^N\times(0,\infty)$ with dimension $N\geq1$. Throughout this work, the exponents in Eq. \eqref{eq1} satisfy
\begin{equation}\label{exp.range}
m>1, \qquad -2<\sigma<0, \qquad 1\leq p<p_c(\sigma):=1-\frac{\sigma(m-1)}{2},
\end{equation}
noticing that the critical exponent $p_c(\sigma)\in(1,m)$ for $\sigma\in(-2,0)$. We are thus dealing with a singular potential of Hardy-type acting on the reaction term, and our main goal is to put into evidence the interesting (and sometimes unexpected) influence of this potential on the dynamics of the equation.

The competition between diffusion (either linear or quasilinear) and terms with singular potentials became a fashionable research subject in the last decades, starting from the well known paper by Baras and Goldstein \cite{BG84}. In that paper, the Hardy potential $K/|x|^2$ is considered and it is shown that existence or non-existence of solutions (in suitable functional spaces) to the linear reaction-diffusion equation
\begin{equation}\label{eq.linear}
u_t=\Delta u+\frac{K}{|x|^2}u
\end{equation}
is strongly related to the Hardy inequality with optimal constant $(N-2)^2/4$. Indeed, for constants $K>(N-2)^2/4$ no solutions exist as they blow-up instantaneously, while for the opposite case or for potentials $K/|x|^{\alpha}$ with $0<\alpha<2$ existence of solutions is established. Later on, Cabr\'e and Martel \cite{CM99} extended the threshold between existence and non-existence for Eq. \eqref{eq.linear} to general potentials $a(x)\in L^1_{\rm loc}(\Omega)$ instead of $|x|^{-2}$ (where $\Omega\subset\real^N$ is a bounded domain), establishing a sharp condition for the non-existence of solutions in terms of the spectrum of the (linear) operator $-\Delta-a(x)$. The connections between the Hardy inequality and the properties of solutions to the heat equation with a Hardy potential were later developed in \cite{VZ00} employing more refined functional inequalities such as the sharp Hardy-Poincar\'e inequality (see also \cite{VZ12}). Using similar techniques as in \cite{CM99}, Goldstein and Zhang \cite{GZ02} extend the study of existence and non-existence to general linear parabolic operators with variable coefficients in the leading order term.

The mathematical study of the semilinear problem with singular potential, also known as the Hardy equation
\begin{equation}\label{eq.semi}
u_t=\Delta u+K|x|^{\sigma}u^p, \qquad p>1, \ -2<\sigma<0,
\end{equation}
saw a number of very recent papers addressing the question of existence of solutions in suitable functional spaces or for the Cauchy problem with weakly regular initial conditions (see works such as \cite{BSTW17, BS19, CIT21a, CIT21b, T20}) or even singular ones \cite{HT21}. The same question of existence of global solutions is studied with fractional diffusion \cite{HS21, HT21}. Going into finer properties related to the dynamics of the equation Eq. \eqref{eq.semi}, such as similarity solutions and behavior near blow-up for general solutions, Filippas and Tertikas \cite{FT00} proved that for $-2<\sigma<0$ there exists a unique, decreasing blow-up self-similar solution to Eq. \eqref{eq.semi} provided that $1<p<p_S=(N+2+2\sigma)/(N-2)$ and then an infinity of such self-similar solutions for $p_S<p<p_{JL}$, where $p_{JL}$ is a larger critical exponent known as the Joseph-Lundgren exponent (whose complicated expression we omit here). Their analysis has been completed very recently by Mukai and Seki \cite{MS21} with the range $p>p_{JL}$, for which exact blow-up rates and asymptotic expansions in regions are established.

The competition between quasilinear diffusion and singular potentials of Hardy type has been also investigated. Some of the first works (up to our knowledge) dealing with Eq. \eqref{eq1} are \cite{Qi98, Su02}, where the Fujita-type exponent $p_F(\sigma)=m+(2+\sigma)/N$ is established, the fast diffusion case $(N-2)/N<m<1$ also being considered. However, these authors only work with $p>m$, which is authomatic if $m<1$ but leaves aside a full range of parameters for $m>1$. Similar to Fujita's seminal work \cite{Fu66}, it is proved in \cite{Qi98} that, for $\max\{1,m\}<p\leq p_F(\sigma)$ all the non-trivial solutions blow up in finite time, while for $p>p_F(\sigma)$ global solutions in self-similar form are constructed and the analysis for $p>p_F(\sigma)$ is then refined in \cite{Su02}. Apart from the explicit value of the Fujita-type exponent depending on $\sigma$, it appears that in these ranges of exponents the qualitative behavior is similar to the one without singular potential (that is, with $\sigma=0$, see \cite{S4}). Later on, different quasilinear diffusion operators were considered together with the Hardy-type potential: fast diffusion $0<m<1$ in \cite{GK03, GGK05}, $p$-Laplacian diffusion in \cite{AP00} and doubly nonlinear diffusion in \cite{Ko04}. Very recently, the authors considered the borderline case $p=m$ with the exact Hardy potential $K|x|^{-2}$ in a short note \cite{IS20b} and established, by means of a transformation to the porous medium equation, an interesting case of continuation after blow-up: all the solutions blow up (either in finite time or even instantaneously) only at $x=0$ but they keep belonging to the suitable weighted spaces allowing for the development of the weak theory for any $t>0$. Once more, the optimal Hardy constant $K=(N-2)^2/4$ comes into play limiting the existence and non-existence ranges also in this specific quasilinear case treated in \cite{IS20b}.

Having these precedents as starting points, we describe below our main results and some quite surprising applications of them.

\medskip

\noindent \textbf{Main results.} As we already have said, the main goal of this paper is to construct a unique, global in time self-similar solution to Eq. \eqref{eq1} in the range of exponents \eqref{exp.range}. More specifically, we look for solutions with radial symmetry in the form
\begin{equation}\label{SSS}
u(x,t)=t^{\alpha}f(\xi), \qquad \xi=|x|t^{-\beta},
\end{equation}
with $\alpha>0$, $\beta>0$ exponents to be found. Introducing the ansatz \eqref{SSS} into Eq. \eqref{eq1}, we find that the self-similarity exponents are
\begin{equation}\label{self.exp}
\alpha=-\frac{\sigma+2}{\sigma(m-1)+2(p-1)}>0, \qquad \beta=-\frac{m-p}{\sigma(m-1)+2(p-1)}>0,
\end{equation}
which are positive precisely because, in the range
$$
1\leq p<p_c(\sigma)=1-\frac{\sigma(m-1)}{2}, \qquad p_c(\sigma)\in(1,m),
$$
we have $\sigma(m-1)+2(p-1)<0$. The similarity profiles $f(\xi)$ solve the non-autonomous differential equation
\begin{equation}\label{ODE}
(f^m)''(\xi)+\frac{N-1}{\xi}(f^m)'(\xi)-\alpha f(\xi)+\beta\xi f'(\xi)+\xi^{\sigma}f(\xi)^p=0.
\end{equation}
Let us stress here that the range of exponents \eqref{exp.range} is not covered in the above mentioned work \cite{Qi98}, which deals with the case $p>m$, and up to our knowledge, it remained open. The main theorem of the present work is the following:
\begin{theorem}\label{th.1}
Let $m$, $p$ and $\sigma$ as in \eqref{exp.range} if $N\geq2$ or with the extra restriction $\sigma>-1$ if $N=1$. There exists a \emph{unique profile} $f(\xi)$ solution to \eqref{ODE} with the following \emph{local behavior at the origin}
\begin{equation}\label{beh.Q1}
f(\xi)\sim\left[D(\sigma)-\frac{m-p}{m(N+\sigma)(\sigma+2)}\xi^{\sigma+2}\right]^{1/(m-p)}, \qquad {\rm as} \ \xi\to0,
\end{equation}
with $D(\sigma)>0$, and having an \emph{interface} at some finite point $\xi_0\in(0,\infty)$ in the sense that
\begin{equation}\label{beh.interf}
f(\xi)>0 \ {\rm for} \ \xi\in(0,\xi_0), \qquad f(\xi_0)=0, \ (f^m)'(\xi_0)=0.
\end{equation}
Moreover, the profile $f(\xi)$ is decreasing on $(0,\xi_0)$.
\end{theorem}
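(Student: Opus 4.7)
The plan is a shooting argument in the one-parameter family of local solutions at the origin indexed by $D=D(\sigma)>0$ appearing in the expansion \eqref{beh.Q1}. First, I would make that expansion rigorous: for each $D>0$, write the ODE \eqref{ODE} in integral form (double integrating the quasilinear part against $\xi^{N-1}\,d\xi$) and apply a contraction mapping in a weighted space on a small interval $[0,\delta]$ to produce a unique local solution $f_D$ with the prescribed behavior. The restriction $\sigma>-1$ in the one-dimensional case is exactly what is needed for the integrability of the reaction term $\xi^\sigma f_D^p$ after one integration. Each $f_D$ is then extended maximally to an interval $[0,\xi_{\max}(D))$ on which $f_D>0$.

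I would then classify parameters by the global behavior at $\xi_{\max}(D)$:
\begin{align*}
\mathcal A &= \{D>0:\ \xi_{\max}(D)<\infty \text{ and } (f_D^m)'(\xi_{\max}(D))=0\},\\
\mathcal B &= \{D>0:\ \xi_{\max}(D)=+\infty\},\\
\mathcal C &= \{D>0:\ \xi_{\max}(D)<\infty \text{ and } (f_D^m)'(\xi_{\max}(D))<0\},
\end{align*}
so that $(0,\infty)=\mathcal A\sqcup\mathcal B\sqcup\mathcal C$. By continuous dependence of $f_D$ on the shooting parameter $D$ at any fixed $\xi>0$, both $\mathcal B$ and $\mathcal C$ are open in $(0,\infty)$. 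Non-emptiness of $\mathcal C$ is established for small $D$: near such a small-amplitude profile the diffusion dominates and drives $f_D$ down to zero at a finite point with a corner. Non-emptiness of $\mathcal B$ is obtained for large $D$ by constructing an explicit positive subsolution of \eqref{ODE} bounded below on $[0,\infty)$, exploiting that $\sigma<0$ makes $\xi^\sigma f^p$ decay at infinity and that, with $p<m$, the absorption $-\alpha f+\beta\xi f'$ dominates the reaction for large amplitudes. Since $(0,\infty)$ is connected and both $\mathcal B$ and $\mathcal C$ are open, $\mathcal A$ is non-empty, and any $D^*\in\mathcal A$ produces an interface profile satisfying \eqref{beh.interf}.

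The uniqueness of $D^*$ together with the monotonicity claim on $(0,\xi_0)$ is the delicate step, and the one I expect to be the main obstacle. The natural approach is to pass to a phase-plane reformulation: with $\eta=\ln\xi$ and suitable phase-space variables (for example a relative-gradient type variable $X\propto\xi(f^m)'/(mf)$ together with $Y\propto\xi^{\sigma+2}f^{p-m}$, which absorbs the non-autonomous coefficient $\xi^\sigma$), the ODE \eqref{ODE} becomes a planar autonomous system. The local behavior \eqref{beh.Q1} at $\xi=0$ and the interface behavior \eqref{beh.interf} at $\xi_0$ correspond each to a specific critical point of this system, and uniqueness of a heteroclinic connection between them follows from a Sturm-type monotonicity argument on trajectories: two profiles in $\mathcal A$ with $D_1<D_2$ would induce two such trajectories in the positive quadrant which, by the structure of the vector field, cannot remain disjoint until the interface equilibrium without violating uniqueness of the initial value problem. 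The same trajectory analysis yields the monotonicity $f'(\xi)<0$ on $(0,\xi_0)$, since the selected trajectory lies entirely in the region $X<0$ corresponding to decreasing profiles. Carrying this out rigorously in presence of the singular coefficients $(N-1)/\xi$ and $\xi^\sigma$ is the technical core of the argument.
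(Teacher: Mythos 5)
Your existence scheme is in the same spirit as the paper's (a three-sets shooting over the family of local solutions indexed by $D=f(0)$; the paper runs it in a three-dimensional phase space, with $P_0$ playing the role of your $\mathcal B$ and $Q_3$ that of your $\mathcal C$), but one step is not justified as written: the openness of $\mathcal B=\{D:\xi_{\max}(D)=+\infty\}$. Continuous dependence at each fixed $\xi$ gives openness of $\mathcal C$ (transversal vanishing is stable) but says nothing about a condition at $\xi=\infty$; you must first classify the asymptotics of globally positive profiles and show they all land in a genuinely attracting regime. The paper does exactly this: globally positive profiles necessarily end up increasing like $K\xi^{(\sigma+2)/(m-p)}$ (the attractor $P_0$, Lemma \ref{lem.P0}), and oscillations/limit cycles are excluded by a monotonicity-of-the-flow lemma (Lemma \ref{lem.cross}). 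Your one-line justifications for the non-emptiness of $\mathcal B$ and $\mathcal C$ would also need real work (in the paper these are Propositions \ref{prop.lower} and \ref{prop.higher}, each a full phase-plane analysis of a limiting invariant plane), but those are at least plausible sketches.

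The uniqueness argument, however, has a genuine gap that I do not think is repairable along the route you propose. First, the planar autonomous reduction does not exist: with $\eta=\ln\xi$, $X=\xi f'/f$ and $Y=\xi^{\sigma+2}f^{p-m}$, the equation for $\dot X$ necessarily involves $\xi^2f^{1-m}$ (coming from the terms $-\alpha f+\beta\xi f'$ after dividing by $(f^m)''$-normalization), and $\xi^2f^{1-m}$ is a third monomial independent of $X$ and $Y$; this is precisely why the paper's system \eqref{PSsyst1} is three-dimensional. A two-dimensional reduction would also collapse the whole one-parameter family of local solutions $\{f_D\}_{D>0}$ onto a single orbit, which cannot happen since \eqref{ODE} has no scaling symmetry relating different $D$. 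Second, even granting a phase-space picture, the argument ``two trajectories cannot remain disjoint until the interface equilibrium without violating IVP uniqueness'' proves nothing: distinct trajectories are automatically disjoint, and both endpoints ($Q_1$ and $P_1$ in the paper's notation) carry two-dimensional invariant manifolds, so nothing a priori forbids several heteroclinic connections --- indeed the paper expects non-uniqueness in dimension $N=1$ for $\sigma\le-1$. The paper's actual uniqueness proof is of a completely different nature: a sliding/optimal-barrier argument based on the rescaling $f_k(\xi)=k^{-2/(m-1)}f(k\xi)$, which sends solutions to supersolutions exactly because $\sigma(m-1)+2(p-1)<0$; one then excludes a contact point of the optimal barrier in the interior (comparison of second derivatives), at the interface, and at $\xi=0$ (using the second-order expansion \eqref{beh.Q1} and the derivative estimate \eqref{beh.deriv}), and finishes with a separation-of-supports argument in the time variable. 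Some ingredient of this quantitative, comparison-based type is what your proposal is missing.
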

The condition $(f^m)'(\xi_0)=0$ in \eqref{beh.interf} is the standard contact condition at the interface point ensuring that, in a neighborhood of this point, the function defined as in \eqref{SSS} starting from the profile $f(\xi)$ becomes a weak solution for the porous medium diffusion, as explained in \cite[Section 9.8]{VPME}. The restriction $\sigma>-1$ in dimension $N=1$ is a common feature in the theory of Hardy-type equations (see for example \cite{BSTW17, HT21}).

Let us also remark at this point that, in the special range $N\geq2$ and $-2<\sigma\leq-1$, the profiles $f(\xi)$ defined in \eqref{beh.Q1} fail from having a good slope at $\xi=0$. This is a fact that occurs when dealing with singular coefficients; just as an example a similar situation has been also found in \cite{RV06} in the study of the non-homogeneous porous medium equation (where the singular potential, or density, appears in front of $u_t$), where such profiles with a peak at $\xi=0$ are found and it is shown that they are fundamental for the dynamics of the equation. We will keep calling them solutions, allowing ourselves for now an "abuse of language", as they have totally similar properties as in the range $\sigma>-1$ and will be true solutions in suitable (weighted) spaces in which the general theory of Eq. \eqref{eq1} is developed, also being the candidates for the large time behavior of general solutions in this range. This formation of a "local peak" at $x=0$ (blow-up of the derivative of the profile) for $\sigma<-1$ is intuitively expected, since the behavior at $x=0$ should converge to a vertical asymptote as $\sigma\to-2$ (as shown in \cite{IS20b}).  We refrain here from entering the functional-analytic questions related to the general notion of solution to Eq. \eqref{eq1}, as all these questions (including existence, uniqueness or non-uniqueness of general solutions, and large time behavior) will be addressed in the forthcoming paper \cite{IMSPrep} where the self-similar solutions in Theorem \ref{th.1} are strongly used. Instead of this, we give below some interesting applications that can be deduced directly from the self-similar solutions. We picture in Figure \ref{fig1} some profiles starting with the behavior \eqref{beh.Q1} at $\xi=0$, in both cases $-1<\sigma<0$ and $-2<\sigma<-1$ to illustrate the local difference among them.

\begin{figure}[ht!]
  % Requires \usepackage{graphicx}
  \begin{center}
  \subfigure[$\sigma\in(-1,0)$]{\includegraphics[width=7.5cm,height=6cm]{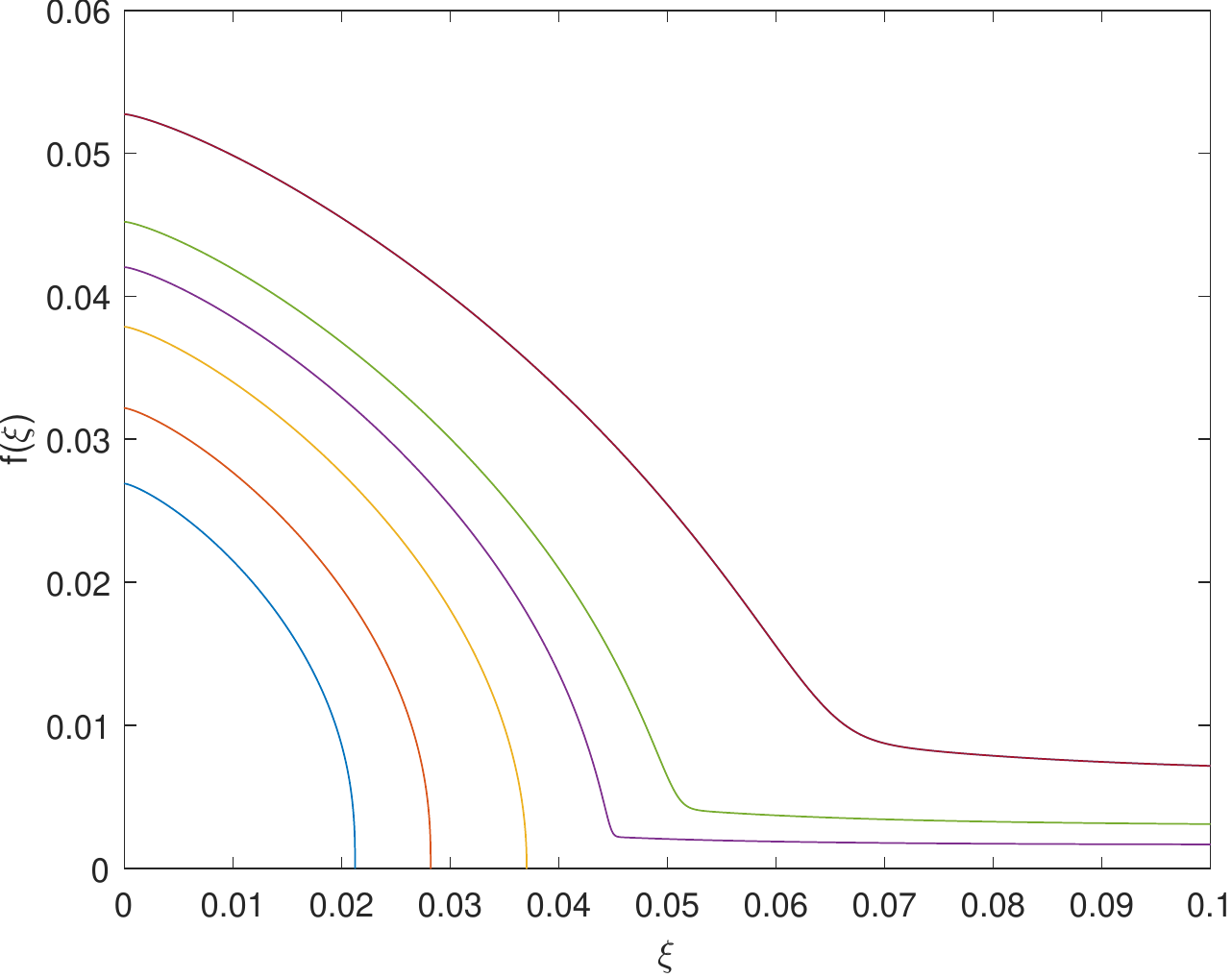}}
  \subfigure[$\sigma\in(-2,-1)$]{\includegraphics[width=7.5cm,height=6cm]{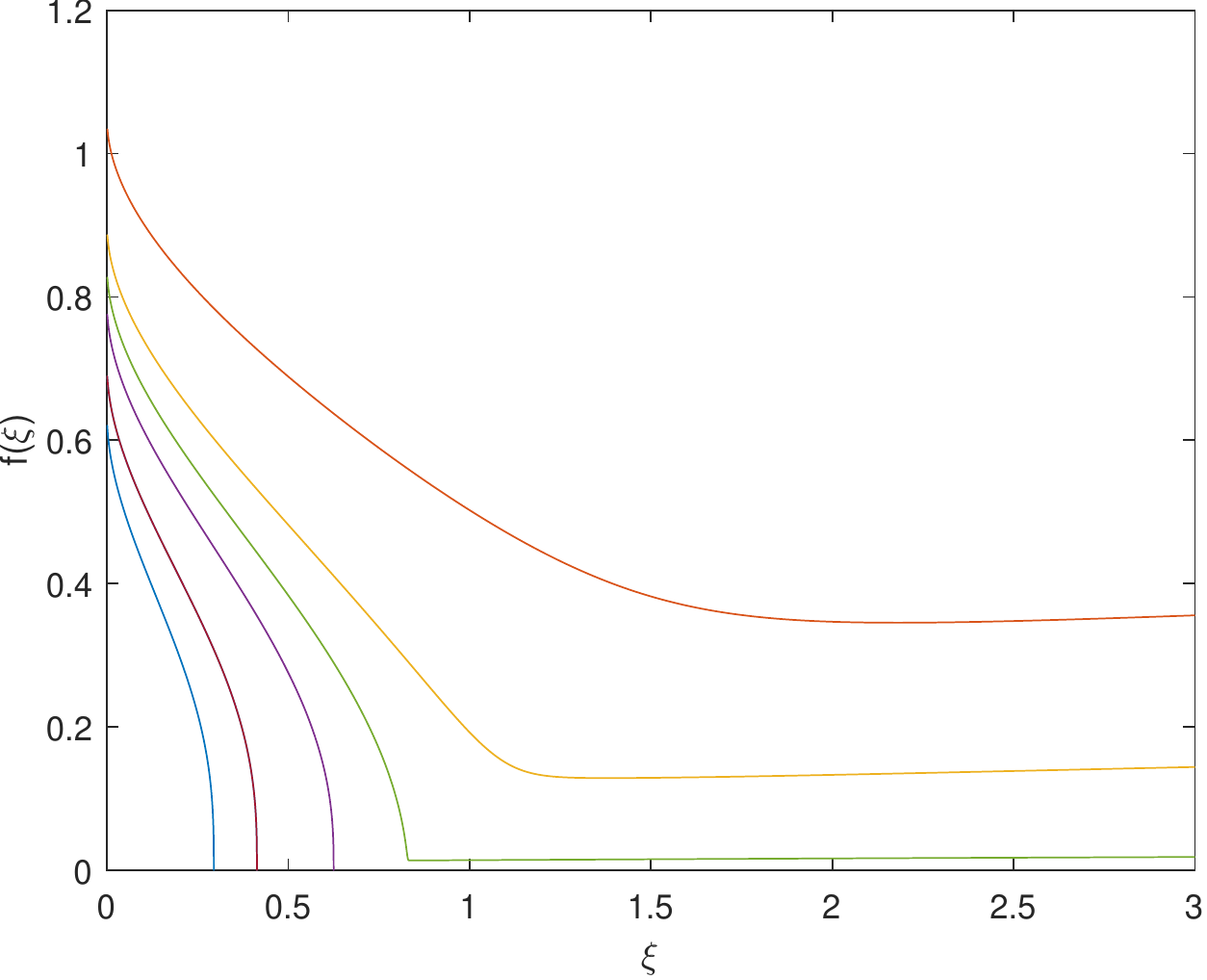}}
  \end{center}
  \caption{A shooting of profiles with local behavior \eqref{beh.Q1}. Experiments for $m=3$, $N=3$, $p=1.2$ and $\sigma=-0.7$, respectively $p=1.5$ and $\sigma=-1.5$}\label{fig1}
\end{figure}

\medskip

\noindent \textbf{Applications of the main results.} We gather below several nice consequences of the existence of profiles as in Theorem \ref{th.1}.

\medskip

\noindent \textbf{1. No finite time blow-up}. Since the solution constructed in Theorem \ref{th.1} is global, we already have an example of solution without finite time blow-up to Eq. \eqref{eq1}. This is one of the most striking effects of the presence of the singular potential: indeed, it is well known that, if $\sigma=0$, \emph{any solution} to Eq. \eqref{eq1} with $m>1$ and $1<p<m$ blows up in finite time, as shown for example in the monograph \cite{S4}. While in our case, a \emph{strong contrast appears}, as the singular potential acts in the sense of preventing blow-up. An \emph{intuitive explanation} of this phenomenon would be the following: in the competition between the diffusion (spreading the mass) and the reaction term, if $p>1$ is not sufficiently big, mass spreads faster far away from the neighborhood of $x=0$ where the singular potential is very strong, thus entering the region where the potential is weaker and limits the effect of the reaction term. More generally, provided some comparison principle is allowed, since the solution \eqref{SSS} expands at the same time in amplitude and support, it would stop \emph{any other solution} to Eq. \eqref{eq1} from blowing up in finite time. This question of comparison is not easy and will also be considered in the paper in preparation \cite{IMSPrep}.

\medskip

\noindent \textbf{2. Preventing blow-up for other equations with potentials}. The solution \eqref{SSS} with the profile $f(\xi)$ given by Theorem \ref{th.1} gives an upper bound to other equations with weighted reaction terms. Indeed, if we let $V(|x|)\in C(\real)$ to be a radially symmetric, continuous function such that
$$
V(0)<\infty, \qquad V(|x|)\leq |x|^{\sigma}, \ {\rm for \ any} \ x\in\real^N\setminus\{0\},
$$
then our self-similar solution becomes a supersolution to the equation
\begin{equation}\label{potV}
u_t=\Delta u^m+V(|x|)u^p, \qquad m>1, \ 1\leq p<p_c(\sigma),
\end{equation}
which has a comparison principle according to standard theory since $V(|x|)$ is bounded and $p\geq1$. Thus, taking into account that our self-similar solution \eqref{SSS} expands simultaneously in amplitude and support as $t>0$ increases, it can be put above any compactly supported and bounded initial condition, if given sufficient time. Hence, comparison shows that \emph{any solution} to Eq. \eqref{potV} is global in time. A noticeable case is the family of equations
\begin{equation}\label{eq.AdB}
u_t=\Delta u^m+(1+|x|)^{\sigma}u^p, \qquad \sigma<0,
\end{equation}
analyzed in great detail by Andreucci and DiBenedetto in \cite{AdB91}. We can thus derive that all the solutions to Eq. \eqref{eq.AdB} with compactly supported initial conditions are global in time. We also recall here that compactly supported potentials $V(x)$ have been considered in \cite{FdPV06, FdP18}, where it is shown that for dimension $N=1$, the global existence exponent is $p_1=(m+1)/2$, while for $N\geq2$, the global existence exponent is $p_2=m$. We observe that these exponents coincide with $p_c(\sigma)$ taken exactly in our limit cases $\sigma=-1$ for $N=1$ and $\sigma=-2$ for $N\geq2$.

\medskip

\noindent \textbf{3. Non-uniqueness for Eq. \eqref{eq1} with zero initial trace}. Let us notice that the solution \eqref{SSS} introduced in Theorem \ref{th.1} has zero initial trace. Thus, from the trivial initial condition $u_0\equiv0$ we obtain two different solutions. This kind of non-uniqueness has been noticed in ranges of exponents where forward self-similar solutions exist for semilinear reaction-diffusion equations (that is, with $m=1$), for example for some reaction exponents $p$ larger than the Fujita exponent, see \cite{HW82} and \cite[Remark (ii), page 77]{QS}. However, Haraux and Weissler \cite{HW82} prove non-uniqueness with respect to data having zero initial trace only in suitable $L^p$ spaces, since the self-similar solutions in \cite{HW82} concentrate at $x=0$ with infinite pointwise limit, while our self-similar solution takes zero initial trace in $L^{\infty}$ sense. This opens up an interesting question in the same line as the one addressed in \cite{W85}: for which initial conditions $u_0$, uniqueness holds true for Eq. \eqref{eq1}? We will try to give an answer to it in the forthcoming paper \cite{IMSPrep}.

\medskip

As a final comment, our analysis includes the lower limit case $p=1$ but not the upper limit case $p=p_c(\sigma)$, where a different type of self-similar solutions, in exponential form and with "eternal" life span, is expected, since $\sigma(m-1)+2(p-1)=0$, in line with papers such as \cite{IS21c}.

\medskip

\noindent \textbf{Some comments on the proofs and organization of the paper.} The techniques we use in the proof of Theorem \ref{th.1} are very different for the existence and for the uniqueness part. Existence will follow from a detailed analysis of an autonomous, quadratic dynamical system in line with the study performed by two of the authors in their previous papers \cite{IS19, IS21a} dealing with the range $1\leq p<m$ but with $\sigma>0$. A number of differences will appear in the analysis, mostly not due to the change of sign of $\sigma$ but to the change of sign of the very influential constant $\sigma(m-1)+2(p-1)$. Uniqueness is proved by using a \emph{sliding} technique inspired from \cite{IV10, YeYin} based on the construction of optimal barriers for the solution via rescaling. Due to technical reasons, dimension $N=1$ comes with some differences, thus, for the readers' convenience, the main flow of the proofs will be given in dimension $N\geq2$, while dimension $N=1$ will be considered separately at the end of the paper. Thus, Section \ref{sec.local} deals with the dynamical system and the analysis of its finite critical points. It is then followed by Section \ref{sec.infty} where the critical points at infinity of the dynamical system are analyzed, thus completing the local analysis of the system. Existence of the self-similar profile will follow from a shooting technique performed in the phase-space, in Section \ref{sec.exist}. The monotonicity of all the profiles with behavior as in \eqref{beh.Q1} and the uniqueness of the profile with interface will be the subject of Section \ref{sec.uniq}, thus completing the proof of Theorem \ref{th.1} in dimension $N\geq2$. The paper ends with a section devoted to the special dimension $N=1$, explaining the local differences at the points where they appear and, in particular, why in dimension $N=1$ we have to modify the statement and only consider the range $\sigma\in(-1,0)$.

\section{The phase space. Local analysis}\label{sec.local}

As explained in the Introduction, our main object of study is the non-autonomous differential equation \eqref{ODE}. In order to study the possible behaviors of its solutions, we transform it into an autonomous dynamical system by the following change of variable, adapting the one used in previous papers \cite{IS19, IS21a}
\begin{equation}\label{PSchange}
X(\eta)=\frac{m}{\alpha}\xi^{-2}f(\xi)^{m-1}, \ \ Y(\eta)=\frac{m}{\alpha}\xi^{-1}f(\xi)^{m-2}f'(\xi), \ \ Z(\eta)=\frac{1}{\alpha}\xi^{\sigma}f(\xi)^{p-1},
\end{equation}
where the new independent variable $\eta$ is introduced via the differential equation
\begin{equation}\label{ind.var}
\frac{d\eta}{d\xi}=\frac{\alpha}{m}\xi f(\xi)^{1-m}.
\end{equation}
By expressing $f'(\xi)$ in terms of $Y$ from the second equation in \eqref{PSchange} and replacing it into \eqref{ODE}, we find after straightforward calculations the following three-dimensional dynamical system in which Eq. \eqref{ODE} is transformed (where the dot derivatives are understood with respect to the new variable $\eta$):
\begin{equation}\label{PSsyst1}
\left\{\begin{array}{ll}\dot{X}=X[(m-1)Y-2X],\\
\dot{Y}=-Y^2-\frac{\beta}{\alpha}Y+X-NXY-XZ,\\
\dot{Z}=Z[(p-1)Y+\sigma X].\end{array}\right.
\end{equation}
Notice that in the system \eqref{PSsyst1} the planes $\{X=0\}$ and $\{Z=0\}$ are invariant for the system and that $X\geq0$, $Z\geq0$, only $Y$ being allowed to change sign. The system generalizes in dimension $N$ the one used in \cite{IS21a}, but the main differences with respect to its study will be the effect of the negativity of the constant $\sigma(m-1)+2(p-1)$, a factor that introduces some significant changes with respect to the analysis performed in the above quoted work. Assume now that $p>1$. The critical points in the finite part of the phase space associated to the system \eqref{PSsyst1} are
$$
P_0=(0,0,0), \ \ P_1=\left(0,-\frac{\beta}{\alpha},0\right), \ \ {\rm and} \ P^{\gamma}=(0,0,\gamma),
$$
for any $\gamma>0$. We give below the local analysis of the flow of the system in a neighborhood of each critical point.

\begin{lemma}[Local analysis near $P_0$]\label{lem.P0}
The critical point $P_0$ behaves like an attractor for the trajectories of the system \eqref{PSsyst1}. The orbits entering it contain profiles with the local behavior
\begin{equation}\label{beh.P0}
f(\xi)\sim K\xi^{(\sigma+2)/(m-p)}, \qquad {\rm as} \ \xi\to\infty, \qquad K>0 \ {\rm free \ constant},
\end{equation}
thus increasing to infinity.
\end{lemma}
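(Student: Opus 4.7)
The plan is to proceed in three steps. First, linearize the system \eqref{PSsyst1} at $P_0$ and read off its spectral structure. Second, since the linearization turns out to be degenerate, perform a center manifold reduction and analyze the reduced flow. Third, translate the reduced dynamics back, via the change of variables \eqref{PSchange} and the relation \eqref{ind.var}, into an asymptotic statement for profiles $f(\xi)$.

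At $P_0$ the Jacobian of the vector field reads
\begin{equation*}
J(P_0)=\begin{pmatrix} 0 & 0 & 0 \\ 1 & -\beta/\alpha & 0 \\ 0 & 0 & 0 \end{pmatrix},
\end{equation*}
with eigenvalues $-\beta/\alpha<0$ (simple) and $0$ (double). The one-dimensional stable direction is spanned by $(0,1,0)$, while the two-dimensional center subspace is spanned by $(1,\alpha/\beta,0)$ and $(0,0,1)$. I would then invoke center manifold theory to represent $W^c$ locally as a graph $Y=\phi(X,Z)=(\alpha/\beta)X+O(|X|^{2}+|Z|^{2})$, and substitute into the $X$ and $Z$ equations of \eqref{PSsyst1} to obtain the reduced planar dynamics
\begin{equation*}
\dot X=cX^{2}+\text{h.o.t.},\qquad \dot Z=cXZ+\text{h.o.t.},\qquad c=\frac{\sigma(m-1)+2(p-1)}{m-p}.
\end{equation*}
The assumption \eqref{exp.range} gives $\sigma(m-1)+2(p-1)<0$ and $m-p>0$, hence $c<0$. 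Together with the exponential contraction in the stable direction $Y$, this shows that inside the invariant octant $\{X\geq 0,\,Z\geq 0\}$ every orbit converges to $P_0$, so $P_0$ indeed behaves as an attractor in the relevant sector of phase space. Explicit integration of $\dot X=cX^{2}$ and $\dot Z=cXZ$ gives $X(\eta)\sim A/\eta$ and $Z(\eta)\sim B/\eta$, so the approach to $P_0$ is polynomial.

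To recover the asymptotic profile, I would exploit the leading order relation $Y\sim(\alpha/\beta)X$ on $W^{c}$. Using \eqref{PSchange},
\begin{equation*}
\frac{Y}{X}=\frac{\xi f'(\xi)}{f(\xi)}\longrightarrow \frac{\alpha}{\beta}=\frac{\sigma+2}{m-p},
\end{equation*}
and integrating this first order asymptotic ODE yields precisely $f(\xi)\sim K\xi^{(\sigma+2)/(m-p)}$ for some $K>0$ determined by the particular orbit. The range \eqref{exp.range} forces the exponent $(\sigma+2)/(m-p)>0$, so these profiles grow to infinity. To identify the regime of $\xi$ on which this asymptotic applies, I would use \eqref{ind.var}: inserting $f\sim K\xi^{(\sigma+2)/(m-p)}$ gives $d\eta/d\xi\sim C\xi^{-1-c}$, which integrates to $\eta\to\infty$ iff $\xi\to\infty$, so an entering orbit on $W^{c}$ corresponds precisely to the limit $\xi\to\infty$ as claimed.

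The principal obstacle is the non-hyperbolicity of $P_0$: the double zero eigenvalue prevents any direct linear classification, and the attractor character must be extracted from the nonlinear reduced dynamics, whose sign is fixed exactly by the key inequality $\sigma(m-1)+2(p-1)<0$ contained in \eqref{exp.range}. A secondary bookkeeping issue is the correspondence between the dynamical time $\eta$ and the physical variable $\xi$ through the implicit relation \eqref{ind.var}, which requires a careful check to confirm that the asymptotic on $W^{c}$ really translates to an asymptotic as $\xi\to\infty$ rather than, say, $\xi\to 0^+$.
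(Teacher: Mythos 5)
Your proposal is correct and follows essentially the same route as the paper: a center manifold reduction at the non-hyperbolic point $P_0$, the reduced quadratic dynamics $\dot{X}\sim cX^2$, $\dot{Z}\sim cXZ$ with $c=-1/\beta<0$ forced by $\sigma(m-1)+2(p-1)<0$, and translation back to profiles via \eqref{PSchange}. The only cosmetic difference is that the paper extracts \eqref{beh.P0} from the first integral $Z\sim KX$ of the reduced system (which also makes the free constant $K$ transparent) and checks the direction $\xi\to\infty$ directly from $X\to0$, whereas you use $Y/X\to\alpha/\beta$ and the relation \eqref{ind.var}; both are equivalent.
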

\begin{proof}
The linearization of the system \eqref{PSsyst1} near $P_0$ has the matrix
$$
M(P_0)=\left(
         \begin{array}{ccc}
           0 & 0 & 0 \\
           1 & -\frac{\beta}{\alpha} & 0 \\
           0 & 0 & 0 \\
         \end{array}
       \right),
$$
with a negative eigenvalue and a two-dimensional center manifold. We next study the flow of the system on the center manifold by using the Center Manifold Theorem \cite[Theorem 1, Section 2.12]{Pe}. To this end, we first replace the variable $Y$ by $W=(\beta/\alpha)Y-X$ in order to get the canonical form of the system \eqref{PSsyst1} near $P_0$:
\begin{equation}\label{interm1}
\left\{\begin{array}{ll}\dot{X}=-\frac{1}{\beta}X^2+\frac{(m-1)\alpha}{\beta}XW,\\
\dot{W}=-\frac{\beta}{\alpha}W-\frac{\alpha}{\beta}W^2-\frac{\alpha(m+1)+N\beta}{\beta}XW-\frac{\beta}{\alpha}XZ-\frac{(N-2)\beta+m\alpha}{\beta}X^2,\\
\dot{Z}=-\frac{1}{\beta}XZ+\frac{\alpha(p-1)}{\beta}ZW.\end{array}\right.
\end{equation}
We now look for a center manifold of the form
$$
W=h(X,Z)=aX^2+bXZ+cZ^2+O(|(X,Z)|^3),
$$
with $a$, $b$ and $c$ coefficients to be determined. By replacing this form into the equation of the center manifold given in \cite[Theorem 1, Section 2.12]{Pe} and identifying only the quadratic terms, we readily obtain the expansion of the center manifold near $P_0$ as
$$
h(X,Z)=-\frac{\alpha}{\beta^2}[(N-2)\beta+\alpha m]X^2-XZ+XO(|(X,Z)|^2),
$$
while the flow on the center manifold is given by the reduced system
\begin{equation}\label{interm2}
\left\{\begin{array}{ll}\dot{X}&=-\frac{1}{\beta}X^2+X^2O(|(X,Z)|),\\
\dot{Z}&=-\frac{1}{\beta}XZ+XO(|(X,Z)|^2),\end{array}\right.
\end{equation}
in a neighborhood of its origin $(X,Z)=(0,0)$. It thus follows that the orbits on the center manifold enter the point $P_0$. The system \eqref{interm2} can be integrated in first approximation to get $Z\sim KX$ for any free constant $K>0$, deriving thus the behavior \eqref{beh.P0} by replacing $Z$ and $X$ in terms of profiles with their definitions in \eqref{PSchange}. Since $X\to0$ at $P_0$, we get that
$$
\xi^{-2}f(\xi)^{m-1}\sim\xi^{-2+(m-1)(\sigma+2)/(m-p)}=\xi^{[\sigma(m-1)+2(p-1)]/(m-p)}\to0,
$$
thus the behavior \eqref{beh.P0} is taken as $\xi\to\infty$ owing to the fact that $\sigma(m-1)+2(p-1)<0$ in our range \eqref{exp.range}, ending the proof.
\end{proof}
\begin{lemma}[Local analysis near $P_1$]\label{lem.P1}
The system \eqref{PSsyst1} in a neighborhood of $P_1$ has a two-dimensional stable manifold and a one-dimensional unstable manifold. The orbits entering $P_1$ on the two-dimensional manifold correspond to the interface behavior \eqref{beh.interf} at a point $\xi_0\in(0,\infty)$, with the more precise local behavior
\begin{equation}\label{beh.P1}
f(\xi)\sim\left[C-\frac{\beta(m-1)}{2m}\xi^2\right]_{+}^{1/(m-1)}, \qquad {\rm as} \ \xi\to\xi_0=\sqrt{\frac{2mC}{\beta(m-1)}}, \ \xi<\xi_0,
\end{equation}
where $C>0$ is a free constant.
\end{lemma}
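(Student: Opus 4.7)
The plan is to perform a standard linearization of \eqref{PSsyst1} at $P_1=(0,-\beta/\alpha,0)$ and then translate the resulting invariant manifolds back into statements about the profile $f$ via the change of variables \eqref{PSchange}.

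First, I would compute the Jacobian at $P_1$. Because $X=0$ and $Z=0$ at this point, the derivatives $\partial\dot{X}/\partial Y$, $\partial\dot{X}/\partial Z$, $\partial\dot{Y}/\partial Z$, $\partial\dot{Z}/\partial X$ and $\partial\dot{Z}/\partial Y$ all vanish, so the Jacobian is block-triangular,
\begin{equation*}
M(P_1)=\begin{pmatrix} -\dfrac{(m-1)\beta}{\alpha} & 0 & 0\\[4pt] 1+\dfrac{N\beta}{\alpha} & \dfrac{\beta}{\alpha} & 0\\[4pt] 0 & 0 & -\dfrac{(p-1)\beta}{\alpha}\end{pmatrix},
\end{equation*}
and its eigenvalues can be read off the diagonal: $\lambda_1=-(m-1)\beta/\alpha<0$, $\lambda_2=\beta/\alpha>0$ and $\lambda_3=-(p-1)\beta/\alpha<0$, using $\alpha,\beta>0$ from \eqref{self.exp} together with $m>1$ and the standing assumption $p>1$. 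The Stable Manifold Theorem \cite[Section 2.7]{Pe} therefore yields a two-dimensional stable manifold and a one-dimensional unstable manifold at $P_1$, as claimed.

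Next, I would translate a trajectory entering $P_1$ as $\eta\to\infty$ back into profile language. The limit $Y\to-\beta/\alpha$ reads $(m/\alpha)\xi^{-1}f^{m-2}f'(\xi)\to-\beta/\alpha$, which is equivalent to $(f^{m-1})'(\xi)\sim -(m-1)\beta\xi/m$. Integrating this asymptotic relation between $\xi$ and a tentative endpoint $\xi_0$ produces exactly the local form \eqref{beh.P1}, the free constant $C>0$ being the shooting parameter along the $\lambda_1$-eigendirection of the stable manifold; the $\lambda_3$-eigendirection, which a direct calculation shows to be $(0,0,1)^T$, governs only the rate at which $Z\to0$ and does not alter the leading behavior of $f$. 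Finiteness of $\xi_0$ follows from \eqref{ind.var}: since $f\to 0$ on the orbit, the factor $f^{1-m}$ blows up, $d\eta/d\xi\to+\infty$, and thus $\eta\to+\infty$ while $\xi$ stays bounded at some $\xi_0<\infty$. The contact condition $(f^m)'(\xi_0)=0$ then comes for free, since $(f^m)'=mf\cdot f^{m-2}f'$ where $f^{m-2}f'$ has the finite limit $-\beta\xi_0/m$ while $f\to 0$.

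The main point I expect to negotiate carefully is not the linear algebra but a sign/geometry check on the stable manifold: one must verify that the trajectories approaching $P_1$ do so with $X>0$ and $Z\geq 0$, so that they correspond to genuine non-negative profiles, and with $\xi$ strictly increasing towards $\xi_0$ rather than oscillating. This is handled by noting that the $\lambda_1$-eigenvector $(1,-(\alpha+N\beta)/(m\beta),0)^T$ lies in the invariant plane $\{Z=0\}$ and preserves positivity of $X$ near $P_1$, that the $\lambda_3$-eigenvector preserves positivity of $Z$, and that $\xi(\eta)$ is monotone on the admissible region because $d\eta/d\xi$ in \eqref{ind.var} is positive; this ensures that the abstract stable-manifold orbits indeed lift to physical interface profiles as described in \eqref{beh.interf}.
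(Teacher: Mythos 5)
Your proposal is correct and follows essentially the same route as the paper: linearize at $P_1$, read off the eigenvalues $-(m-1)\beta/\alpha$, $\beta/\alpha$, $-(p-1)\beta/\alpha$ (negative, positive, negative under $p>1$), identify the two-dimensional stable manifold spanned by $e_1=(1,-(N\beta+\alpha)/(m\beta),0)$ and $e_3=(0,0,1)$, and integrate $Y\sim-\beta/\alpha$ to obtain \eqref{beh.P1}. The additional checks you flag (finiteness of $\xi_0$, the contact condition, positivity of $X$ and $Z$) are consistent with the paper, which treats them as immediate consequences of \eqref{beh.P1}.
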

\begin{proof}
The linearization of the system in a neighborhood of $P_1$ has the matrix
$$
M(P_1)=\left(
         \begin{array}{ccc}
           -\frac{(m-1)\beta}{\alpha} & 0 & 0 \\
           1+\frac{N\beta}{\alpha} & \frac{\beta}{\alpha} & 0 \\
           0 & 0 & -\frac{(p-1)\beta}{\alpha} \\
         \end{array}
       \right)
$$
with eigenvalues $\lambda_1=-(m-1)\beta/\alpha$, $\lambda_2=\beta/\alpha$ and $\lambda_3=-(p-1)\beta/\alpha$, and corresponding eigenvectors
$$
e_1=\left(1,-\frac{N\beta+\alpha}{m\beta},0\right), \ e_2=(0,1,0), \ e_3=(0,0,1).
$$
We thus obtain a two-dimensional stable manifold formed by trajectories entering $P_1$ tangent to the plane spanned by $e_1$ and $e_3$, while the one-dimensional manifold is contained in the $Y$ axis. The orbits entering $P_1$ on the stable manifold contain profiles such that
$$
Y=\frac{m}{\alpha}\xi^{-1}f(\xi)^{m-2}f'(\xi)\sim-\frac{\beta}{\alpha},
$$
whence by integration we find the local behavior \eqref{beh.P1}. We readily notice that \eqref{beh.P1} is more precise than \eqref{beh.interf} as it fulfills the interface condition $(f^m)'(\xi_0)=0$.
\end{proof}
The critical points $P^{\gamma}$ do not introduce any interesting orbit, as it is proved in the following rather tedious part.
\begin{lemma}\label{lem.Pgamma}
There are no interesting profiles $f(\xi)$ contained in orbits of the system \eqref{PSsyst1} connecting to or from any of the points $P^{\gamma}$ with $\gamma>0$.
\end{lemma}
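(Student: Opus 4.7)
The plan is to linearize \eqref{PSsyst1} at each $P^{\gamma}$, identify its stable and center manifolds, and translate any connecting orbit back into a profile via \eqref{PSchange}. We shall see that the stable manifold lies in the invariant plane $\{X=0\}$ and thus cannot carry any admissible profile, while orbits on the non-trivial direction of the center manifold correspond to profiles vanishing at $\xi=0$ with a specific power law, incompatible with the required behaviors \eqref{beh.Q1} and \eqref{beh.interf}.

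A direct computation gives the Jacobian
\[
M(P^{\gamma})=\begin{pmatrix} 0 & 0 & 0 \\ 1-\gamma & -\beta/\alpha & 0 \\ \sigma\gamma & (p-1)\gamma & 0 \end{pmatrix},
\]
with characteristic polynomial $-\lambda^{2}(\lambda+\beta/\alpha)$ and thus eigenvalues $-\beta/\alpha<0$ together with a double zero. Since the whole $Z$-axis is a line of equilibria, one zero eigendirection is trivially tangent to it and carries no dynamics; the remaining non-trivial direction of the center manifold would be analyzed by a Center Manifold reduction as in \cite[Theorem 1, Section 2.12]{Pe}. The one-dimensional stable manifold is tangent to $(0,1,-(p-1)\gamma\alpha/\beta)$, and because the plane $\{X=0\}$ is invariant for \eqref{PSsyst1}, is entirely contained in it; but $X\equiv 0$ forces $f\equiv 0$ via \eqref{PSchange}, which contradicts $Z\to\gamma>0$, so no admissible profile lives on the stable manifold.

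Hence any orbit producing a genuine profile and connecting to $P^{\gamma}$ must sit on the non-trivial direction of the center manifold. Along such an orbit, $Z\to\gamma$ and \eqref{PSchange} give $\xi^{\sigma}f(\xi)^{p-1}\to\alpha\gamma$, whence
\[
f(\xi)\sim(\alpha\gamma)^{1/(p-1)}\,\xi^{-\sigma/(p-1)}.
\]
Substituting this into the formula for $X$ yields $X\sim C\,\xi^{-[\sigma(m-1)+2(p-1)]/(p-1)}$, and since $\sigma(m-1)+2(p-1)<0$ in the range \eqref{exp.range}, this exponent of $\xi$ is strictly positive. The limit $X\to 0$ therefore forces $\xi\to 0$, and the profile contained in the orbit vanishes at the origin as $f(\xi)\sim C\,\xi^{-\sigma/(p-1)}\to 0$ (using $\sigma<0$ and $p>1$). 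This is incompatible with \eqref{beh.Q1}, where $f(0)=D(\sigma)^{1/(m-p)}>0$, and it does not produce an interior interface of the form \eqref{beh.interf} either, so no interesting profile is obtained from any $P^{\gamma}$.

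The main obstacle is the explicit center manifold reduction: one needs to compute the quadratic truncation of the reduced flow on the non-trivial direction of the center manifold in order to decide whether orbits really leave $P^{\gamma}$ as $\eta$ increases from $-\infty$ (so that the asymptotic analysis above is meaningful), and to handle separately the exceptional values of $\gamma$ at which the Jordan structure of the double zero eigenvalue degenerates. In all these cases, however, the identification $f\sim C\,\xi^{-\sigma/(p-1)}$ from $Z\to\gamma$ together with $X\to 0$ is robust and independent of the detailed center-manifold geometry, yielding the conclusion of the lemma.
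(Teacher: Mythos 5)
Your argument is correct, but it takes a genuinely different route from the paper's. The paper translates $P^{\gamma}$ to the origin, passes to canonical coordinates via two linear substitutions, computes the two-dimensional center manifold explicitly, desingularizes the reduced flow by $d\theta=X\,d\eta$, and concludes that an orbit can connect to $P^{\gamma}$ along the center manifold only when the constant term $\sigma\gamma+k(1-\gamma)$ vanishes, i.e.\ $\gamma=-1/(\alpha(p-1))<0$; hence for $\gamma>0$ there are \emph{no} connecting orbits at all outside the invariant plane $\{X=0\}$. You bypass that computation entirely: from the limits $X\to0$, $Z\to\gamma>0$ and the definitions \eqref{PSchange} you read off $f(\xi)\sim(\alpha\gamma)^{1/(p-1)}\xi^{-\sigma/(p-1)}$ and $X\sim C\xi^{-[\sigma(m-1)+2(p-1)]/(p-1)}$ with strictly positive exponent, so the limit must be attained as $\xi\to0$ and the profile vanishes there, which is incompatible with \eqref{beh.Q1} and \eqref{beh.interf}; moreover, since $\xi$ is increasing along orbits by \eqref{ind.var}, having $\xi\to0$ at $P^{\gamma}$ means these points can only \emph{emit} orbits, never absorb them, which is precisely what the shooting argument of Section 4 and the classification in Corollary 4.5 require. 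Your conclusion is weaker than the paper's (you leave open whether such orbits exist, while the paper shows they do not), but it suffices for the lemma as stated and for every place it is invoked. Two remarks: the ``main obstacle'' you flag at the end --- the quadratic truncation of the reduced flow on the center manifold --- is in fact not needed for your argument, exactly because your profile asymptotics depend only on the limits $X\to0$, $Z\to\gamma$ and not on the center-manifold geometry, so your proof is already complete without it; and your treatment of the stable manifold (tangent to $(0,1,-(p-1)\gamma\alpha/\beta)$, contained in the invariant plane $\{X=0\}$, hence carrying no admissible profiles) coincides with the paper's.
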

\begin{proof}
The local analysis near a point $P^{\gamma}$ with some $\gamma>0$ follows the same steps as in the study performed in \cite[Lemma 2.4]{IS21a} but with a different outcome. In a first step, we have to translate the critical point to the origin by letting $Z=\overline{Z}+\gamma$ and obtaining a new system
\begin{equation}\label{interm3}
\left\{\begin{array}{ll}\dot{X}=X[(m-1)Y-2X],\\
\dot{Y}=-Y^2-\frac{\beta}{\alpha}Y+(1-\gamma)X-NXY-X\overline{Z},\\
\dot{\overline{Z}}=\overline{Z}[(p-1)Y+\sigma X]+\sigma\gamma X+(p-1)\gamma
Y.\end{array}\right.
\end{equation}
We readily notice that the linearization of the system \eqref{interm3} near the origin has a one-dimensional stable manifold and a two-dimensional center manifold. The most involved part of the analysis is the study of the center manifold, following the strategy in \cite[Section 2.12]{Pe}. To this end, we perform a double change of variable in \eqref{interm3} in order to reduce some linear terms and to put it into a canonical form, by replacing $Y$ and $\overline{Z}$ with the new variables $G$ and $H$ defined by
$$
G=\frac{\beta}{\alpha}Y-(1-\gamma)X, \qquad H=\overline{Z}+kY, \qquad k=\frac{\alpha(p-1)\gamma}{\beta},
$$
to get the following new system in variables $(X,G,H)$:
\begin{equation}\label{interm4}
\left\{\begin{array}{ll}\dot{X}=\left[\frac{(m-1)\alpha(1-\gamma)}{\beta}-2\right]X^2+\frac{(m-1)\alpha}{\beta}GX,\\
\dot{G}=-\frac{\beta}{\alpha}G-\frac{\alpha}{\beta}G^2-\frac{\beta}{\alpha}HX-D_1X^2-D_2GX,\\
\dot{H}=(\gamma\sigma+k(1-\gamma))X+D_3XH-D_4X^2-\frac{\alpha^2kp}{\beta^2}G^2-D_5GX+\frac{\alpha(p-1)}{\beta}GH,\end{array}\right.
\end{equation}
with coefficients
\begin{equation}\label{interm4b}
\begin{split}
&D_1=\frac{(1-\gamma)[(N-2)\beta+m\alpha(1-\gamma)-\alpha\gamma(p-1)]}{\beta}, \ D_2=\frac{N\beta+\alpha(m+1)-\alpha\gamma(m+p)}{\beta},\\
&D_3=\sigma-k+\frac{(p-1)\alpha(1-\gamma)}{\beta}, \ D_4=\frac{(1-\gamma)\alpha^2\gamma(p-1)[(N+\sigma)\beta+\alpha(\gamma+p-2p\gamma)]}{\beta^3},\\
&D_5=\frac{\alpha^2\gamma(p-1)[(N+\sigma)\beta+\alpha(\gamma+2p-3p\gamma)]}{\beta^3}.
\end{split}
\end{equation}
By applying the center manifold theorem \cite[Theorem 1, Section 2.12]{Pe} to the new system \eqref{interm4} and employing an argument by induction, we obtain that the two-dimensional center manifold at the origin in \eqref{interm4} has the form
$$
G=AX^2-XH+XO(|(X,H)|^2), \qquad A=\frac{\alpha}{\beta}\left[\gamma\sigma+k(1-\gamma)-D_1\right]
$$
where $D_1$ is the first coefficient in \eqref{interm4b}. Therefore, the flow on the center manifold is given by the following reduced system
\begin{equation}\label{flowgamma}
\left\{\begin{array}{ll}\dot{X}=\left[\frac{(m-1)\alpha(1-\gamma)}{\beta}-2\right]X^2+X^2O(|(X,H)|)\\
\dot{H}=\left[\sigma\gamma+k(1-\gamma)\right]X+\left[\sigma-k+\frac{k(1-\gamma)}{\gamma}\right]XH-DX^2+XO(|(X,H)|^2).\end{array}\right.
\end{equation}
with
$$
D=\frac{k(1-\gamma)\alpha[(N+\sigma-k)\beta+\alpha p(1-\gamma)]}{\beta^2}.
$$
We omit here the detailed calculations leading to all the previous expressions, since they can be done straightforwardly following the similar calculations in \cite[Lemma 2.4]{IS21a}. The main detail here is that all the terms are multiples of $X$. One can thus perform a change of the independent variable by letting
$$
d\theta=X d\eta,
$$
and the system \eqref{flowgamma} is topologically equivalent near the origin with the system obtained by dividing by $X$ its both equations. We then notice that $(X,H)=(0,0)$ remains a critical point and there are orbits connecting to it (with respect to the new variable $\theta$) if and only if the linear term in the equation for $\dot{H}$ vanishes. But this happens for
$$
\gamma=-\frac{1}{\alpha(p-1)}=\frac{\sigma(m-1)+2(p-1)}{(\sigma+2)(p-1)}<0,
$$
which is not in our range of interest. Thus, no critical point $P^{\gamma}$ with $\gamma>0$ contains orbits entering or going out of it on the center manifold. Finally, the orbits on the one-dimensional stable manifold enter $P^{\gamma}$ tangent to the eigenvector $e_2=(0,-1,k)$ corresponding to the only non-zero eigenvalue $-\beta/\alpha$ and it is easy to see that they are contained in the invariant plane $\{X=0\}$.
\end{proof}

\noindent \textbf{Changes for $p=1$.} With the above, we have completed the analysis of the finite critical points for $p>1$. In the case $p=1$, the proof of Lemma \ref{lem.Pgamma} becomes much easier as $k=0$, but there is a change with respect to the critical point $P_1$, which now expands into a critical line
$$
P_1^{\gamma}=\left(0,-\frac{\beta}{\alpha},\gamma\right), \qquad \gamma>0.
$$
\begin{lemma}[Analysis of the points $P_1^{\gamma}$ for $p=1$]\label{lem.P1p1}
For any $\gamma>0$, the critical point $P_1^{\gamma}$ has a one-dimensional stable manifold, a one-dimensional unstable manifold and a one-dimensional center manifold. The orbits entering $P_1^{\gamma}$ on the stable manifold contain profiles with interface behaving as in \eqref{beh.P1}.
\end{lemma}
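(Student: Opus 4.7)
The plan is to mimic the proof of Lemma \ref{lem.P1}, adapted to the fact that for $p=1$ the third equation of \eqref{PSsyst1} degenerates to $\dot Z = \sigma X Z$. As a first step, I specialize the system to $p=1$ and verify that every point $P_1^\gamma=(0,-\beta/\alpha,\gamma)$ with $\gamma>0$ is indeed an equilibrium: this follows immediately because $\dot X$, $\dot Y$, $\dot Z$ all vanish when $X=0$ and $Y=-\beta/\alpha$, regardless of $\gamma$. Consequently the $Z$-axis (restricted to the plane $Y=-\beta/\alpha$) forms a one-parameter critical line, which is what is meant by ``$P_1$ expands into a critical line''.

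Next I compute the Jacobian at $P_1^\gamma$:
\begin{equation*}
M(P_1^\gamma)=\left(\begin{array}{ccc}
-\frac{(m-1)\beta}{\alpha} & 0 & 0 \\[2pt]
1+\frac{N\beta}{\alpha}-\gamma & \frac{\beta}{\alpha} & 0 \\[2pt]
\sigma\gamma & 0 & 0
\end{array}\right).
\end{equation*}
This triangular matrix has eigenvalues $\lambda_1=-(m-1)\beta/\alpha<0$, $\lambda_2=\beta/\alpha>0$ and $\lambda_3=0$, yielding exactly the dimensional decomposition in the statement: one stable, one unstable and one center direction.

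The identification of the center manifold is trivial here: the direction of the zero eigenvalue is the $Z$-axis, which coincides with the critical line itself, so the center manifold is made of equilibria and carries no dynamics. This is the feature that makes the present lemma much easier than Lemma \ref{lem.Pgamma}. The unstable eigenvector corresponding to $\lambda_2$ is $e_2=(0,1,0)$, so the one-dimensional unstable manifold lies in the invariant plane $\{X=0\}$ and does not produce profiles of interest.

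The only substantive part is the analysis of the stable manifold. The eigenvector associated to $\lambda_1$ has a nonzero first component, so trajectories approaching $P_1^\gamma$ tangentially to it satisfy $X\to 0$, $Y\to-\beta/\alpha$, $Z\to\gamma$ as $\eta\to\infty$. Using the second equation in \eqref{PSchange} together with $f^{m-2}f'=(m-1)^{-1}(f^{m-1})'$, the condition $Y\sim-\beta/\alpha$ rewrites as $(f^{m-1})'(\xi)\sim-\beta(m-1)\xi/m$, which by integration yields
\begin{equation*}
f(\xi)^{m-1}\sim C-\frac{\beta(m-1)}{2m}\xi^{2},
\end{equation*}
that is, precisely the interface behavior \eqref{beh.P1} at $\xi_0=\sqrt{2mC/[\beta(m-1)]}$. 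The main (but minor) point to check is that the passage from the leading-order $Y$-asymptotic to the local expansion of $f$ is legitimate, which follows as in the proof of Lemma \ref{lem.P1} since the remaining contributions from $X$ and $Z-\gamma$ along the eigenvector produce only lower-order corrections in $\xi_0-\xi$.
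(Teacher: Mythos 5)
Your argument is correct and is exactly the standard one: the paper itself omits the proof, deferring to \cite[Lemma 2.2]{IS19}, but your linearization, the identification of the center manifold with the critical line of equilibria, and the translation of $Y\sim-\beta/\alpha$ into the expansion \eqref{beh.P1} mirror precisely the method used for Lemma \ref{lem.P1}. The only detail you could add (stated in the paper's remark, though not required by the lemma) is that $Z=\alpha^{-1}\xi^{\sigma}$ for $p=1$ forces the interface point to satisfy $\xi_0=(\alpha\gamma)^{1/\sigma}$, linking $\gamma$ to the location of the interface.
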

We omit here the proof, as it is given in \cite[Lemma 2.2]{IS19}, where the interface point of the profile entering $P_1^{\gamma}$ is related to $\gamma$ by $\xi_0=(\alpha\gamma)^{1/\sigma}$.

\section{Local analysis at infinity in dimension $N\geq2$}\label{sec.infty}

This section is devoted to the local analysis of the critical points of the system \eqref{PSsyst1} at infinity, completing thus the classification of the critical points and, consequently, of the possible behaviors for the profiles $f(\xi)$ solutions to Eq. \eqref{ODE}. We follow the theory in \cite[Section 3.10]{Pe} by passing to the Poincar\'e hypersphere through the new variables $(\overline{X},\overline{Y},\overline{Z},W)$ defined as
$$
X=\frac{\overline{X}}{W}, \ Y=\frac{\overline{Y}}{W}, \ Z=\frac{\overline{Z}}{W}
$$
and we find from standard theory \cite[Theorem 4, Section 3.10]{Pe} that the critical points at space infinity lie on the equator of the hypersphere, thus at points
$(\overline{X},\overline{Y},\overline{Z},0)$ where $\overline{X}^2+\overline{Y}^2+\overline{Z}^2=1$ and the following system is satisfied:
\begin{equation}\label{Poincare}
\left\{\begin{array}{ll}\overline{X}(\overline{X}\overline{Z}+(N-2)\overline{X}\overline{Y}+m\overline{Y}^2)=0,\\
\overline{X}\overline{Z}[(\sigma+2)\overline{X}+(p-m)\overline{Y}]=0,\\
\overline{Z}[p\overline{Y}^2+(\sigma+N)\overline{X}\overline{Y}+\overline{X}\overline{Z}]=0,\end{array}\right.
\end{equation}
Taking into account that we are considering only points with coordinates $\overline{X}\geq0$ and $\overline{Z}\geq0$ and that we are working in dimension $N\geq2$, we find the following critical points on the Poincar\'e hypersphere:
\begin{equation*}
\begin{split}
&Q_1=(1,0,0,0), \ \ Q_{2,3}=(0,\pm1,0,0), \ \ Q_4=(0,0,1,0), \\
&Q_5=\left(\frac{m}{\sqrt{(N-2)^2+m^2}},-\frac{N-2}{\sqrt{(N-2)^2+m^2}},0,0\right).
\end{split}
\end{equation*}
We stress here that in dimension $N=1$ things are a bit more complicated, as in some range of exponents a new critical point will appear, but this is considered in Section \ref{sec.dim1} at the end of the paper. We give below the detailed analysis of the critical points listed above.

\medskip

\noindent \textbf{Analysis near $Q_1$ and $Q_5$ in dimension $N\geq3$}. In order to analyze the flow of the system \eqref{PSsyst1} locally near these points, we translate them into the finite part of a phase space associated to a new system, following the theory given in \cite[Theorem 5(a), Section 3.10]{Pe}. In our case, this system becomes
\begin{equation}\label{systinf1}
\left\{\begin{array}{ll}\dot{y}=-(N-2)y-z+w-my^2-\frac{\beta}{\alpha}yw,\\
\dot{z}=(\sigma+2)z+(p-m)yz,\\
\dot{w}=2w-(m-1)yw,\end{array}\right.
\end{equation}
where
\begin{equation}\label{interm6}
y=\frac{Y}{X}, \qquad z=\frac{Z}{X}, \qquad w=\frac{1}{X},
\end{equation}
and the minus sign has been chosen in the general framework of \cite[Theorem 5, Section 3.10]{Pe} according to the direction of the flow. Indeed, since in the equation for $\dot{X}$ in the original system \eqref{PSsyst1} we readily get $\dot{X}<0$ in a neighborhood of $Q_1$, as $|X/Y|\to+\infty$ near this point, we have to choose the minus sign in the general theory, leading to \eqref{systinf1}. We thus notice that $Q_1$ is mapped into the critical point $(y,z,w)=(0,0,0)$ and $Q_5$ into the critical point $(y,z,w)=(-(N-2)/m,0,0)$ of the system \eqref{systinf1}.
\begin{lemma}[Local analysis near $Q_1$ for $N\geq3$]\label{lem.Q1}
Let $N\geq3$. The system \eqref{systinf1} in a neihghborhood of $Q_1$ has a two-dimensional unstable manifold and a one-dimensional stable manifold. The orbits going out on the unstable manifold contain profiles with the local behavior \eqref{beh.Q1} as $\xi\to0$.
\end{lemma}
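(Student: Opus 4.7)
My plan is to compute the linearization of \eqref{systinf1} at the origin, identify the dimensions of the invariant manifolds, and then match the orbits leaving $Q_1$ with the expected expansion of $f$ near $\xi=0$. The Jacobian of \eqref{systinf1} at $(y,z,w)=(0,0,0)$ is upper triangular, so its eigenvalues can be read off the diagonal:
$$
\lambda_1=-(N-2),\qquad \lambda_2=\sigma+2,\qquad \lambda_3=2.
$$
Since $N\geq 3$ we have $\lambda_1<0$, and since $\sigma>-2$ both $\lambda_2,\lambda_3>0$, so $Q_1$ is hyperbolic with a one-dimensional stable manifold tangent to the $y$-axis and a two-dimensional unstable manifold spanned at the origin by the eigenvectors $v_2=(1,-(N+\sigma),0)$ (for $\lambda_2$) and $v_3=(1,0,N)$ (for $\lambda_3$), which one finds by direct computation.

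To identify the profiles corresponding to orbits leaving $Q_1$, I would try the ansatz
$$
f(\xi)=\bigl[D-c\,\xi^{\sigma+2}+\text{h.o.t.}\bigr]^{1/(m-p)},\qquad D>0,
$$
so that $f(0^+)=D^{1/(m-p)}\in(0,\infty)$, consistent with $X=(m/\alpha)\xi^{-2}f^{m-1}\to+\infty$ as $\xi\to 0$, i.e.\ with the approach to $Q_1$. Substituting into \eqref{ODE} one sees that $-\alpha f=O(1)$ and $\beta\xi f'=O(\xi^{\sigma+2})$ are subdominant with respect to $(f^m)''+\frac{N-1}{\xi}(f^m)'$ and $\xi^\sigma f^p$, which are both of order $\xi^\sigma$. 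Matching these two leading singular contributions forces
$$
c=\frac{m-p}{m(\sigma+2)(N+\sigma)},
$$
exactly as in \eqref{beh.Q1}; the factor $N+\sigma$ is automatically positive under our assumptions $N\geq 3$ and $\sigma>-2$, so no compatibility issue arises.

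To complete the identification I would verify that the ansatz is actually tangent to the unstable manifold at $Q_1$. Plugging the expansion into \eqref{PSchange} and \eqref{interm6} gives, as $\xi\to 0$,
$$
w\sim \tfrac{\alpha}{m}D^{(1-m)/(m-p)}\xi^{2},\qquad z\sim\tfrac{1}{mD}\xi^{\sigma+2},\qquad y\sim -\tfrac{1}{m(N+\sigma)D}\xi^{\sigma+2},
$$
so $(y,z,w)\to(0,0,0)$ with $z/y\to -(N+\sigma)$ and $w/\max(|y|,|z|)\to 0$, matching precisely the slope of the eigendirection $v_2$ associated to the smaller positive eigenvalue $\lambda_2=\sigma+2<\lambda_3=2$. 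This is the dominant tangent direction to the unstable manifold as $\eta\to-\infty$ (i.e.\ $\xi\to 0^+$, since $d\eta/d\xi>0$). The free parameter $D>0$ labels the one-parameter family of orbits obtained after quotienting the two-dimensional unstable manifold by the $\eta$-translation freedom inherent to the autonomous system.

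The only delicate point I expect is passing rigorously from the formal expansion to the actual orbits on the unstable manifold provided by the Unstable Manifold Theorem \cite[Section 2.12]{Pe}, in order to guarantee that \emph{every} profile produced by an orbit leaving $Q_1$ is of the form \eqref{beh.Q1} with the computed constant $c$. I would close this with a standard bootstrap using the leading balance above to recursively fix the higher-order corrections in the ansatz.
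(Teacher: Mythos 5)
Your proposal is correct and follows essentially the same route as the paper: the identical linearization at $Q_1$ with eigenvalues $-(N-2),\ \sigma+2,\ 2$, and the same leading balance between $(f^m)''+\frac{N-1}{\xi}(f^m)'$ and $\xi^{\sigma}f^p$ producing the constant in \eqref{beh.Q1}. The only difference is one of direction: the paper works in phase space first (integrating $dz/dw$ and then $dy/dz$, where the forced choice $K=0$ in $y\sim-z/(N+\sigma)+Kz^{-(N-2)/(\sigma+2)}$ for $N\geq3$ is precisely what guarantees that \emph{every} orbit of \eqref{systinf1} entering the interior of the phase space carries this behavior --- the step you defer to a bootstrap), whereas you posit the profile expansion and verify its tangency to the unstable manifold.
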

\begin{proof}
The linerization of the system \eqref{systinf1} in a neighborhood of $(y,z,w)=(0,0,0)$ has the matrix
$$
M(Q_1)=\left(
         \begin{array}{ccc}
           -(N-2) & -1 & 1 \\
           0 & \sigma+2 & 0 \\
           0 & 0 & 2 \\
         \end{array}
       \right),
$$
with two positive eigenvalues and one negative eigenvalue. The stable manifold is contained in the plane $\{w=0\}$, corresponding to $X=\infty$. The orbits going out on the unstable manifold satisfy $dz/dw\sim(\sigma+2)z/2w$, which after integration reads $z\sim Cw^{(\sigma+2)/2}$. This means that $Z\sim KX^{-\sigma/2}$, $K>0$, which in terms of profiles gives
$$
\xi^{\sigma}f(\xi)^{p-1}\sim K\xi^{\sigma}f(\xi)^{m-1},
$$
hence $f(\xi)\sim K$ for some $K>0$, and this asymptotic behavior is taken as $\xi\to0$ since $X\to\infty$ at $Q_1$. Moreover, since $z/w\sim Cw^{\sigma/2}\to+\infty$ in a neighborhood of $Q_1$, we can in a first approximation step neglect the $w$ term in the first equation in \eqref{systinf1} and find the following asymptotic approximation
$$
\frac{dy}{dz}\sim-\frac{N-2}{\sigma+2}\frac{y}{z}-\frac{1}{\sigma+2},
$$
which by integration leads to
$$
y\sim-\frac{z}{N+\sigma}+Kz^{-(N-2)/(\sigma+2)}.
$$
Since we want to pass through the point $(y,z)=(0,0)$, we have to take $K=0$ in the above estimate for $N\geq3$, obtaining $y\sim-z/(N+\sigma)$, whence $Y/Z\sim-1/(N+\sigma)$. Translating this in terms of profiles via \eqref{PSchange} and integrating the resulting differential equation, we are led to the local behavior given by \eqref{beh.Q1} as $\xi\to0$.
\end{proof}

\noindent \textbf{Remark.} The same local estimate for the behavior in terms of the phase space variables $Y/Z\sim-1/(N+\sigma)$ gives the following local behavior as $\xi\to0$ for the derivative of a profile $f(\xi)$ on the orbits going out of $Q_1$:
\begin{equation}\label{beh.deriv}
(f^m)'(\xi)\sim-\frac{1}{N+\sigma}\left[D-\frac{m-p}{m(N+\sigma)(\sigma+2)}\xi^{\sigma+2}\right]^{p/(m-p)}\xi^{\sigma+1}.
\end{equation}
This local estimate will be very important in the proof of the uniqueness part of Theorem \ref{th.1}.

\begin{lemma}[Local analysis near $Q_5$ for $N\geq3$]\label{lem.Q5}
Let $N\geq3$. The critical point $Q_5$ is an unstable node. The orbits going out of it into the finite part of the phase space contain profiles with a vertical asymptote at $\xi=0$ and the local behavior
\begin{equation}\label{beh.Q5}
f(\xi)\sim C\xi^{-(N-2)/m}, \qquad {\rm as} \ \xi\to0, \qquad C>0 \ {\rm free \ constant}.
\end{equation}
\end{lemma}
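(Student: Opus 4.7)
The plan is to follow the same strategy as in Lemma \ref{lem.Q1}: the point $Q_5$ lies at infinity in $X$, so under the change of variables \eqref{interm6} it is mapped to the finite point $(y^{*},z^{*},w^{*})=(-(N-2)/m,0,0)$ of the auxiliary system \eqref{systinf1}, and I would do the local analysis there.

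First, I would linearize \eqref{systinf1} at this point. Since both $z$ and $w$ vanish at $Q_5$ and every term in the $\dot{z}$ and $\dot{w}$ equations is a multiple of $z$, respectively $w$, the Jacobian is upper triangular: the only potentially off-diagonal entries lie in the first row ($\partial\dot{y}/\partial z=-1$ and $\partial\dot{y}/\partial w=1+(N-2)\beta/(m\alpha)$). The three eigenvalues are therefore read directly off the diagonal:
\begin{equation*}
\lambda_1=N-2,\qquad \lambda_2=(\sigma+2)+\frac{(m-p)(N-2)}{m},\qquad \lambda_3=2+\frac{(m-1)(N-2)}{m}.
\end{equation*}
For $N\geq3$, all three are strictly positive: $\lambda_1>0$ is clear, $\lambda_2>0$ because $\sigma+2>0$ and $m>p$, and $\lambda_3>0$ because $m>1$. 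This shows $Q_5$ is an unstable node.

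Next, I would translate the qualitative information into the profile behavior. From \eqref{PSchange} one gets $y=Y/X=\xi f'(\xi)/f(\xi)$, so the limit $y\to-(N-2)/m$ at $Q_5$ reads $\xi f'(\xi)/f(\xi)\to-(N-2)/m$. Integrating this relation directly yields \eqref{beh.Q5} with some free constant $C>0$.

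Finally, I would check that the behavior is taken as $\xi\to0$, not $\xi\to\infty$. Substituting \eqref{beh.Q5} into the definition of $X$ gives $X\sim \xi^{-(Nm-N+2)/m}$, which blows up precisely when $\xi\to0$ (since the exponent is negative for $N\geq2$ and $m>1$), consistently with $X\to\infty$ at $Q_5$. A quick check that both $Z/X=(1/m)\xi^{\sigma+2}f^{p-m}\to0$ and $1/X\to0$ as $\xi\to0$ under \eqref{beh.Q5} confirms that the trajectory indeed approaches $Q_5$ along the prescribed direction. There is no significant obstacle here: the upper-triangular structure of the Jacobian makes the spectral analysis essentially immediate, and the main (minor) point to get right is the direction of the flow, i.e.\ confirming that orbits going out of the unstable node $Q_5$ into the finite part of the phase space correspond to profiles developing a vertical asymptote precisely at $\xi=0$.
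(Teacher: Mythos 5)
Your proposal is correct and follows essentially the same route as the paper: the same linearization of \eqref{systinf1} at $(y,z,w)=(-(N-2)/m,0,0)$ with the same (upper-triangular) Jacobian and three positive eigenvalues, the same integration of $\xi f'(\xi)/f(\xi)\to-(N-2)/m$ to get \eqref{beh.Q5}, and the same verification via $Z/X\to0$ that the behavior is attained as $\xi\to0$.
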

\begin{proof}
The linearization of the system \eqref{systinf1} in a neighborhood of $Q_5$ has the matrix
$$
M(Q_5)=\left(
         \begin{array}{ccc}
           N-2 & -1 & 1+\frac{(N-2)\beta}{m\alpha} \\
           0 & \sigma+2+\frac{(m-p)(N-2)}{m} & 0 \\
           0 & 0 & 2+\frac{(m-1)(N-2)}{m} \\
         \end{array}
       \right)
$$
with three positive eigenvalues for $N\geq3$. The local behavior is given by the fact that $y\to-(N-2)/m$, which in terms of profiles translates into
$$
\frac{f'(\xi)}{f(\xi)}\sim-\frac{N-2}{m\xi},
$$
which gives the local behavior \eqref{beh.Q5}. In a neighborhood of $Q_5$ we also have that $Z/X\to0$, that is,
\begin{equation}\label{interm5}
\xi^{\sigma+2}f(\xi)^{p-m}\sim\xi^{[m(N+\sigma)-p(N-2)]/m}\to0,
\end{equation}
thus the local behavior \eqref{beh.Q5} is taken in the limit as $\xi\to0$, since the exponent in \eqref{interm5} is positive.
\end{proof}

\medskip

\noindent \textbf{Analysis near $Q_1$ in dimension $N=2$.} We observe that for $N=2$, the points $Q_1$ and $Q_5$ coincide, and we will keep the label $Q_1$ for this mixed point.
\begin{lemma}[Local analysis near $Q_1$ for $N=2$]\label{lem.Q15N2}
Let $N=2$. Then the critical point $Q_1=Q_5$ is a saddle-node in the sense of the theory in \cite[Section 3.4]{GH}. There exists a two-dimensional unstable manifold on which the orbits contain good profiles with local behavior given by \eqref{beh.Q1} as $\xi\to0$. All the rest of the orbits going out of $Q_1$ contain profiles with a vertical asymptote at $\xi=0$ given by
\begin{equation}\label{beh.Q12}
f(\xi)\sim K\left(-\ln\,\xi\right)^{1/m}, \qquad {\rm as} \ \xi\to0, \ K>0 \ {\rm free \ constant}.
\end{equation}
\end{lemma}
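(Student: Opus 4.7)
The linearization of \eqref{systinf1} at $Q_1$ in dimension $N=2$ reads
$$
M(Q_1)=\left(
         \begin{array}{ccc}
           0 & -1 & 1 \\
           0 & \sigma+2 & 0 \\
           0 & 0 & 2 \\
         \end{array}
       \right),
$$
with eigenvalues $0$, $\sigma+2>0$ and $2>0$. Thus there is no stable direction, there is a two-dimensional unstable manifold tangent to the plane spanned by the eigenvectors associated to $\sigma+2$ and $2$, and a one-dimensional center manifold tangent to the $y$-axis. The plan is first to handle the two-dimensional unstable manifold by re-running the computation of Lemma \ref{lem.Q1} with $N=2$, and then to analyze the center manifold, which is the genuinely new object in this degenerate case.

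For the unstable manifold, I would trace step by step the argument used near $Q_1$ for $N\geq3$. Inspecting that proof, the only line that really invoked the dimensional restriction was the integration $y\sim-z/(N+\sigma)+Kz^{-(N-2)/(\sigma+2)}$; for $N=2$ the correction term becomes a free additive constant rather than a blowing-up power, and the requirement that the orbit passes through the origin again forces that constant to vanish. The same manipulations as in Lemma \ref{lem.Q1} then yield $Y/Z\sim-1/(N+\sigma)$, integrating in terms of profiles to the behavior \eqref{beh.Q1} as $\xi\to0$ on a two-parameter family of trajectories.

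For the center manifold, I would first notice that the plane $\{z=0\}$ is invariant (since $\dot z=(\sigma+2)z+(p-m)yz$ vanishes on it) and, once $z=0$, the plane $\{w=0\}$ is also invariant. Consequently, the $y$-axis itself is an invariant curve and is a valid center manifold; on it, the reduced system is simply $\dot y=-m y^2$, so we are in the saddle-node situation of \cite[Section 3.4]{GH}. Integrating gives $y(\eta)=1/(m\eta-C_0)$, and the qualitative picture is standard: the side $y>0$ collects trajectories entering $Q_1$, while the side $y<0$ produces a family of trajectories exiting from $Q_1$ and not lying on the two-dimensional unstable manifold of the previous paragraph. These trajectories together with the unstable manifold exhaust all the outgoing orbits from $Q_1$.

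To identify the profile behavior attached to the exiting center-manifold orbits, I would substitute the ansatz $f(\xi)=K(-\ln\xi)^{1/m}(1+o(1))$ directly into \eqref{ODE} and exploit the dimension-two miracle: since $(f^m)(\xi)\sim K^m(-\ln\xi)$ and
$$
(f^m)''(\xi)+\frac{1}{\xi}(f^m)'(\xi)=\frac{K^m}{\xi^2}-\frac{K^m}{\xi^2}+o\!\left(\frac{1}{\xi^2}\right),
$$
the two $O(\xi^{-2})$ contributions cancel exactly when $N=2$ (this is of course the fact that $\log|x|$ is harmonic in the plane). The remaining terms $-\alpha f$, $\beta\xi f'$ and $\xi^\sigma f^p$ are all of strictly smaller order as $\xi\to0$, so the ansatz is consistent and defines a one-parameter family indexed by $K>0$, matching the dimension of the center manifold. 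Checking back in phase-space, this family satisfies $y=\xi f'/f\sim1/(m\ln\xi)\to0^-$ and $z=\xi^{\sigma+2}f^{p-m}\to0$, $w=X^{-1}\to0$, so it does approach $Q_1$ along the $y<0$ center direction, as expected. Finally, any orbit out of $Q_1$ that is not on the two-dimensional unstable manifold must approach $Q_1$ tangent to the $y$-axis, and by the standard saddle-node normal form it must carry the logarithmic behavior \eqref{beh.Q12}. The main subtlety here is precisely this last step, namely matching the formal center-manifold expansion to the true asymptotics of $f(\xi)$: I would handle it either by a careful bootstrap on the higher-order corrections $g(\xi)=o(1)$ in the ansatz $f=K(-\ln\xi)^{1/m}(1+g)$, or by invoking the $C^k$ regularity of the center manifold given by \cite[Theorem 1, Section 2.12]{Pe} and translating back the uniform estimate $y=1/(m\eta-C_0)$ via \eqref{ind.var}.
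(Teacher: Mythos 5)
Your argument is correct and follows essentially the same route as the paper: the saddle-node structure with eigenvalues $0$, $\sigma+2$, $2$, the two-dimensional unstable manifold carrying \eqref{beh.Q1} by rerunning Lemma \ref{lem.Q1} with the power correction degenerating to a constant, the center direction governed by $\dot y\sim-my^2$ forcing the remaining outgoing orbits into $\{y<0\}$, and the logarithmic behavior \eqref{beh.Q12} obtained from the dominant balance $(f^m)''+\xi^{-1}(f^m)'\sim0$. The subtlety you flag at the end (rigorously matching the formal balance to the true asymptotics) is exactly the step the paper also does not carry out in detail, deferring it to \cite[Lemma 3.5]{IMS21}.
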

\begin{proof}
The first linear term in the equation for $\dot{y}$ in the system \eqref{systinf1} vanishes if $N=2$. It readily follows that at $N=2$ the critical point $Q_1=Q_5$ is a saddle-node with a matrix having eigenvalues $\lambda_1=0$, $\lambda_2=\sigma+2$ and $\lambda_3=2$. We are mainly interested in the two-dimensional unstable manifold tangent to the vector space spanned by the eigenvectors $e_2=(-1,\sigma+2,0)$ and $e_3=(1,0,2)$ corresponding to the eigenvalues $\lambda_2$ and $\lambda_3$, which contains orbits whose analysis leads to the local behavior \eqref{beh.Q1} similarly as in Lemma \ref{lem.Q1}.

All the other orbits, according to the theory in \cite[Section 3.4]{GH}, go out tangent to the direction of the eigenvector corresponding to the eigenvalue $\lambda=0$ which is $e_1=(1,0,0)$. Moreover, we have $|y/w|\to\infty$, $|y/z|\to\infty$ on these orbits in a neighborhood of $Q_1$. The first equation in \eqref{systinf1} gives then locally that $\dot{y}\sim-my^2<0$, which proves that the orbits go out into the region $\{y<0\}$, which coincides with $\{Y<0\}$ in a neighborhood of $Q_1$. It remains to establish that the profiles contained in these orbits have a local behavior given by \eqref{beh.Q12}. This is done by inspection of Eq. \eqref{ODE}, proving that in a neighborhood of the point $Q_1$ along these orbits, the last three terms in Eq. \eqref{ODE} are negligible with respect to the terms involving $(f^m)''$ and $(f^m)'$ and thus the local behavior \eqref{beh.Q12} will be obtained by equating (in a first approximation)
$$
(f^m)''(\xi)+\frac{1}{\xi}(f^m)'(\xi)\sim 0.
$$
We do not give here the details of this part, as it is identical to the proof of \cite[Lemma 3.5]{IMS21} (see also \cite[Lemma 3.4]{IS21b}).
\end{proof}
In fact, if we let $N$ to be a parameter of the system \eqref{systinf1}, we are dealing with a \emph{transcritical bifurcation} of this system at $N=2$, in the sense of \cite{S73} (see also \cite[Section 3.4]{GH}).

\medskip

\noindent \textbf{Analysis near $Q_2$ and $Q_3$.} We translate these points into the finite part of a phase space associated to a new system, following the theory given in \cite[Theorem 5(b), Section 3.10]{Pe}. In our case, this system becomes
\begin{equation}\label{systinf2}
\left\{\begin{array}{ll}\pm\dot{x}=-mx-(N-2)x^2-\frac{\beta}{\alpha}xw+x^2w-x^2z,\\
\pm\dot{z}=-pz-\frac{\beta}{\alpha}zw-(N+\sigma)xz-xz^2+xzw,\\
\pm\dot{w}=-w-\frac{\beta}{\alpha}w^2+xw^2-Nxw-xzw,\end{array}\right.
\end{equation}
where the signs have to be chosen according to the direction of the flow. It is quite obvious that in a neighborhood of $Q_2$ one has to choose the minus sign in \eqref{systinf2}, while in a neighborhood of $Q_3$ one has to choose the plus sign, since $\dot{Y}$ is negative near both $Q_2$ and $Q_3$ but the direction of the flow is reversed. We thus identify $Q_2$ with the origin of \eqref{systinf2} when taken the minus sign and $Q_3$ with the origin of \eqref{systinf2} when taken the plus sign. It readily follows that $Q_2$ is an unstable node, while $Q_3$ is a stable node. Moreover, we have
\begin{lemma}[Local analysis near $Q_2$ and $Q_3$]\label{lem.Q23}
The orbits going out of $Q_2$ to the finite part of the phase space contain profiles $f(\xi)$ which change sign at some $\xi_0\in(0,\infty)$ in the sense that $f(\xi_0)=0$, $(f^m)'(\xi_0)>0$. The orbits entering the point $Q_3$ from the finite part of the phase space contain profiles $f(\xi)$ which change sign at some $\xi_0\in(0,\infty)$ in the sense that $f(\xi_0)=0$, $(f^m)'(\xi_0)<0$.
\end{lemma}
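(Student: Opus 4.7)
The plan is to apply the Poincar\'e-sphere change of variables from \cite[Theorem 5(b), Section 3.10]{Pe}, which maps $Q_2$ and $Q_3$ to the origin of the auxiliary system \eqref{systinf2}. Since near $Q_2$ (where $Y\to+\infty$) the equation for $\dot Y$ is dominated by $-Y^2<0$, the flow along the positive $Y$-axis moves away from infinity as $\eta$ increases, which dictates the minus-sign choice in \eqref{systinf2}; symmetric reasoning at $Y\to-\infty$ forces the plus sign for $Q_3$. The linearization at the origin of \eqref{systinf2} is then the diagonal matrix $\mathrm{diag}(\mp m,\mp p,\mp 1)$: with the minus sign we obtain three positive eigenvalues (hence $Q_2$ is an unstable node), and with the plus sign three negative eigenvalues (hence $Q_3$ is a stable node).

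To translate this into information about the profiles $f(\xi)$, I would rewrite the $Y$-variable via \eqref{PSchange} as
$$
Y \;=\; \frac{1}{\alpha\,\xi}\,\frac{(f^m)'(\xi)}{f(\xi)}.
$$
On an orbit approaching $Q_2$ or $Q_3$ we have $X/|Y|\to 0$ and $Z/|Y|\to 0$, so in particular $X\to 0$ at some $\xi=\xi_0$, which because $\xi$ stays positive and bounded forces $f(\xi_0)=0$. Combined with $|Y|\to\infty$ at the same $\xi_0$, this forces $(f^m)'(\xi_0)$ to tend to a non-zero limit carrying the sign of $Y$. Hence orbits emanating from $Q_2$ contain profiles with $f(\xi_0)=0$ and $(f^m)'(\xi_0)>0$, while orbits entering $Q_3$ contain profiles with $f(\xi_0)=0$ and $(f^m)'(\xi_0)<0$, as asserted.

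Finally, to see that $\xi_0\in(0,\infty)$, I would compare with the expected local form $f^m(\xi)\sim A|\xi-\xi_0|$ compatible with $(f^m)'(\xi_0)\neq 0$; the relation \eqref{ind.var} then gives $d\eta/d\xi\sim(\alpha\xi_0/m)(A|\xi-\xi_0|)^{(1-m)/m}$, whose integral converges near $\xi_0$ since $(1-m)/m\in(-1,0)$ for $m>1$. This is consistent with the orbit reaching the critical point in finite $\xi$-time. The main technical obstacle is to rule out that $\xi_0$ collapses to $0$ or escapes to $+\infty$: this is handled by noting that the endpoint behaviors $\xi\to 0$ or $\xi\to\infty$ have been fully accounted for by the other critical points in Lemmas \ref{lem.P0}, \ref{lem.Q1}, \ref{lem.Q5} and \ref{lem.Q15N2}, none of which produces the $|Y|\to\infty$ asymptotics that characterize $Q_2$ and $Q_3$, so $\xi_0$ must be a strictly positive, finite transversal zero of $f$.
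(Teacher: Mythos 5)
The paper does not actually prove this lemma: it states it and delegates the proof to \cite[Lemma 2.6]{IS21a}, so there is no in-paper argument to compare line by line. Your general framework --- the chart from \cite[Theorem 5(b), Section 3.10]{Pe}, the sign choice dictated by the direction of the flow, and the identification of $Q_2$ as an unstable node and $Q_3$ as a stable node with eigenvalues $m$, $p$, $1$ up to the chosen sign --- is exactly the standard one and is correct as far as it goes.

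The genuine gap is in the step that produces the actual content of the lemma, namely $(f^m)'(\xi_0)\neq 0$. You claim that $f(\xi_0)=0$ together with $|Y|\to\infty$ \emph{forces} $(f^m)'(\xi_0)$ to have a non-zero limit. That inference is false: any behavior $f(\xi)\sim c\,|\xi-\xi_0|^{a}$ with $1/m<a<1/(m-1)$ has $f(\xi_0)=0$ and $(f^m)'(\xi_0)=0$, yet satisfies $|Y|\sim|\xi-\xi_0|^{a(m-1)-1}\to\infty$, $X\to0$, $X/Y\to0$ and $Z/Y\to0$, so the premises you invoke do not single out the transversal zero. What does single it out is the node structure you computed but did not exploit: an orbit leaving $Q_2$ into the interior of the phase space (hence with $x\not\equiv0$, the plane $\{x=0\}$ being invariant) obeys $x\sim Cw^{m}$ with $C\neq0$, because the $x$- and $w$-eigenvalues are $m$ and $1$; equivalently $X\sim CY^{1-m}$. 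Translating through \eqref{PSchange}, this relation reads $\bigl((f^m)'\bigr)^{m-1}\sim C'\xi^{m+1}$, i.e. $(f^m)'(\xi)\to c\,\xi_0^{(m+1)/(m-1)}\neq0$ and $f^m\sim A(\xi-\xi_0)$. The same eigenvalue comparison, and not the bare fact $X/|Y|\to0$ (which by itself is compatible with $X\to\infty$), is what gives $X=x/w\sim Cw^{m-1}\to0$. Finally, the finiteness and positivity of $\xi_0$ should be obtained by integrating $d\xi/\xi=X\,d\eta$ along the orbit using the decay of $X$ toward the node, so that $\ln\xi$ has a finite limit; your appeal to ``the endpoints $\xi\to0$ and $\xi\to\infty$ are already accounted for by the other critical points'' is not a proof, since a priori nothing prevents an orbit converging to $Q_2$ or $Q_3$ from doing so as $\xi\to0$ or $\xi\to\infty$ --- for each critical point the associated limit of $\xi$ has to be computed, exactly as the paper does for $P_0$, $Q_1$ and $Q_5$.
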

We omit here the proof of this Lemma, since it follows the proof of the similar Lemma in previous works such as for example \cite[Lemma 2.6]{IS21a}.

\medskip

\noindent \textbf{No profiles in orbits connecting to $Q_4$.} With respect to the remaining critical point $Q_4$, the local analysis following \cite[Theorem 5(c),Section 3.10]{Pe} would be very hard, since the topologically equivalent system has a linearization with a matrix with all three eigenvalues equal to zero. However, we can go back to Eq. \eqref{ODE} and work in terms of profiles to prove the following result.
\begin{lemma}\label{lem.Q4}
There are no profiles $f(\xi)$ solutions to Eq. \eqref{ODE} contained in orbits entering or going out of $Q_4$.
\end{lemma}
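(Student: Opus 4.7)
The strategy is to argue by contradiction directly from the profile equation \eqref{ODE}, bypassing the Poincar\'e analysis at $Q_{4}$ (where the linearization is fully degenerate). Suppose there is an orbit of \eqref{PSsyst1} approaching $Q_{4}$ whose trajectory corresponds to a profile $f(\xi)$ solving \eqref{ODE}. The approach to $Q_{4}=(0,0,1,0)$ on the Poincar\'e hypersphere means $Z\to\infty$ while $X/Z\to 0$ and $Y/Z\to 0$; via \eqref{PSchange} this translates into
\begin{equation*}
\xi^{\sigma}f(\xi)^{p-1}\to\infty,\qquad \xi^{-(\sigma+2)}f(\xi)^{m-p}\to 0,\qquad \xi^{-(\sigma+1)}f(\xi)^{m-p-1}f'(\xi)\to 0,
\end{equation*}
as $\xi$ tends to some $\xi_{*}\in[0,\infty]$ along the trajectory.

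I would first rule out the cases $\xi_{*}\in(0,\infty]$. If $\xi_{*}\in(0,\infty)$, the middle condition forces $f(\xi_{*})\to 0$ (since $m>p$), while the first condition forces $f(\xi_{*})^{p-1}\to\infty$ for $p>1$ (or $\xi^{\sigma}\to\infty$ for $p=1$); both are incompatible with $\xi_{*}\in(0,\infty)$. If $\xi_{*}=\infty$ (necessarily $p>1$), then testing the model $f(\xi)=C\xi^{\mu}$ translates the first two size conditions into $\mu>-\sigma/(p-1)$ and $\mu<(\sigma+2)/(m-p)$ respectively; this range is empty precisely because the hypothesis $p<p_{c}(\sigma)$ in \eqref{exp.range} is equivalent to $-\sigma/(p-1)>(\sigma+2)/(m-p)$, and the same obstruction extends to a general profile $f$ by bracketing between power functions.

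It remains to handle $\xi_{*}=0$, in which case the power ansatz $f(\xi)\sim C\xi^{\mu}$ yields the now nonempty admissible range $\mu\in\bigl((\sigma+2)/(m-p),-\sigma/(p-1)\bigr)$ (with the convention $-\sigma/(p-1)=+\infty$ if $p=1$). Substituting into \eqref{ODE} one reads off the $\xi$-exponents of the five terms as $m\mu-2$ for $(f^{m})''$ and $(N-1)(f^{m})'/\xi$, as $\mu$ for $\alpha f$ and $\beta\xi f'$, and as $\sigma+\mu p$ for $\xi^{\sigma}f^{p}$; the two strict bounds on $\mu$ translate directly into $\sigma+\mu p<m\mu-2$ and $\sigma+\mu p<\mu$. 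Hence the reaction term $\xi^{\sigma}f^{p}$ has the uniquely smallest exponent and strictly dominates all the other terms as $\xi\to 0$, so no cancellation is possible at leading order and \eqref{ODE} cannot hold, a contradiction.

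The main technical obstacle is upgrading this from the power-law model case to a fully general profile $f$. I would handle it by a squeeze argument: since the admissible interval for $\mu$ is open and both exponent comparisons $\sigma+\mu p<\mu$ and $\sigma+\mu p<m\mu-2$ are strict, one can bracket $f(\xi)$ pointwise between two power functions whose exponents are chosen strictly inside the admissible range, and then propagate the strict domination of $\xi^{\sigma}f^{p}$ over the sum of the other four terms of \eqref{ODE} to the general $f$ in a neighborhood of $\xi=0$, thereby reaching the same contradiction and concluding that no profile is carried by any orbit entering or leaving $Q_{4}$.
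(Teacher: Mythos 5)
Your reduction to the three limiting conditions and your treatment of the cases $\xi_*\in(0,\infty)$ and $\xi_*=\infty$ are sound. For $\xi_*=\infty$ your route is in fact simpler than the paper's: the first two conditions give $f(\xi)\geq\xi^{-\sigma/(p-1)}$ and $f(\xi)\leq\xi^{(\sigma+2)/(m-p)}$ for $\xi$ large, and since $p<p_c(\sigma)$ is equivalent to $-\sigma/(p-1)>(\sigma+2)/(m-p)$, these two pointwise bounds are incompatible as $\xi\to\infty$; the paper instead argues through eventual monotonicity and $(f^m)''\to-\infty$. The bracketing is legitimate there because it is applied only to $f$ itself, with no derivative information required.

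The genuine gap is in the case $\xi_*=0$, which is the only delicate one: there the two size conditions are mutually compatible (your admissible interval for $\mu$ is nonempty), so the ODE must be used, and your claim that the reaction term $\xi^{\sigma}f^p$ strictly dominates the other four terms of \eqref{ODE} with ``no cancellation possible'' is established only for exact power laws. Sandwiching $f$ between two powers gives no control whatsoever on $f'$ and $f''$, hence none on $(f^m)''$, $(N-1)(f^m)'/\xi$ or $\beta\xi f'$: a function trapped between $\xi^{\mu_1}$ and $\xi^{\mu_2}$ can have derivatives far larger than the corresponding power scales. Worse, the natural behaviour of a hypothetical orbit into $Q_4$ is precisely a cancellation between diffusion and reaction: the paper's own proof establishes as an intermediate step that $(f^m)''(\xi)+\tfrac12\xi^{\sigma}f(\xi)^p<0$ near the origin, so the diffusion term is at least half as large (and of opposite sign) as the reaction term, and your claimed leading-order domination is false exactly for the profiles you are trying to exclude. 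To close this case the paper exploits the third limit $Y/Z\to0$ together with the equation to deduce $\xi f''(\xi)/f'(\xi)\to-\infty$, hence that $\xi^K f'(\xi)$ is eventually decreasing for every $K>0$, which forces $\xi^K f'(\xi)\to+\infty$ and contradicts the integrability of $f'$ near $\xi=0$. Some argument of this kind, using the derivative condition and not merely the size of $f$, is needed to replace your squeeze.
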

\begin{proof}
Assume for contradiction that there are such profiles. An orbit connecting to $Q_4$ satisfies the conditions $Z\to\infty$, $X/Z\to0$, $Y/Z\to0$, which translated in terms of profiles give
\begin{equation}\label{interm7}
\xi^{\sigma}f(\xi)^{p-1}\to\infty, \ \ \xi^{-(\sigma+2)}f(\xi)^{m-p}\to0, \ \ \xi^{-(1+\sigma)}f(\xi)^{m-p-1}f'(\xi)\to0,
\end{equation}
the limits in \eqref{interm7} being taken either as $\xi\to\infty$, or as $\xi\to\xi_0\in(0,\infty)$, or as $\xi\to0$. We rule out next all these three possibilities.

\medskip

\noindent \textbf{Step 1. No profiles with $\xi\to\infty$.} Assume for contradiction that the limits in \eqref{interm7} are taken as $\xi\to\infty$. Since $\xi^{\sigma}\to0$, it then follows that $f(\xi)^{p-1}\to\infty$, thus $f(\xi)\to\infty$. We prove first that there is $R>0$ large such that $f(\xi)$ is increasing for $\xi>R$. Indeed, if this is not the case, then there exists a sequence of local minima $\xi_{0,n}$ such that $\xi_{0,n}\to\infty$ as $n\to\infty$ and with $f'(\xi_{0,n})=0$, $f''(\xi_{0,n})\geq0$ for any $n\geq1$. We infer by evaluating Eq. \eqref{ODE} at $\xi=\xi_{0,n}$ that
$$
f(\xi_{0,n})\left[\xi_{0,n}^{\sigma}f(\xi_{0,n})^{p-1}-\alpha\right]\leq0,
$$
which is impossible since $f(\xi_{0,n})>0$ and the term in brackets tends to infinity according to the first limit in \eqref{interm7}. We thus get that $f(\xi)$ is increasing for $\xi>R$ with $R$ sufficiently large. We next deduce from Eq. \eqref{ODE} that
$$
(f^m)''(\xi)=-f(\xi)\left[\xi^{\sigma}f(\xi)^{p-1}-\alpha\right]-\beta\xi f'(\xi)-\frac{N-1}{\xi}(f^m)'(\xi)\to-\infty,
$$
as $\xi\to\infty$, which gives that for any $K>0$ sufficiently large, there exists some $\xi_0(K)>R$, depending on $K$, such that $(f^m)''(\xi)<-K$ for any $\xi>\xi_0(K)$. We fix such $K>0$ large, take $\xi_2>\xi_1>\xi_0(K)$ and find that
$$
(f^m)'(\xi_2)=(f^m)'(\xi_1)+\int_{\xi_1}^{\xi_2}(f^m)''(s)\,ds<(f^m)'(\xi_1)-K(\xi_2-\xi_1)<0
$$
provided $\xi_2$ is taken sufficiently large, and this is a contradiction with the fact that $f$ was increasing for $\xi>\xi_0(K)>R$.

\medskip

\noindent \textbf{Step 2. No profiles with $\xi\to\xi_0\in(0,\infty)$.} This is rather obvious from the first two limits in \eqref{interm7}, as if \eqref{interm7} holds true as $\xi\to\xi_0\in(0,\infty)$, then on the one hand $f(\xi)^{p-1}\to\infty$ and on the other hand $f(\xi)^{m-p}\to0$, which is contradictory since both $p-1$ and $m-p$ are positive powers.

\medskip

\noindent \textbf{Step 3. No profiles with $\xi\to0$.} Assume for contradiction that the limits in \eqref{interm7} are taken as $\xi\to0$. Since $m-p>0$, we readily obtain from the second limit in \eqref{interm7} that $f(\xi)\to0$ as $\xi\to0$, and a similar argument with local minima as in the first step of this proof shows that $f$ is increasing on a right-neighborhood $\xi\in(0,R)$ of the origin. We then let $\xi\in(0,R)$ and write Eq. \eqref{ODE} in the following form:
\begin{equation}\label{interm8}
(f^m)''(\xi)+\frac{1}{2}\xi^{\sigma}f(\xi)^p+\frac{N-1}{\xi}(f^m)'(\xi)+\beta\xi f'(\xi)+f(\xi)\left[\frac{1}{2}\xi^{\sigma}f(\xi)^{p-1}-\alpha\right]=0.
\end{equation}
Since all the last terms are positive for $\xi\in(0,R)$, we infer that
\begin{equation}\label{interm9}
(f^m)''(\xi)+\frac{1}{2}\xi^{\sigma}f(\xi)^p<0, \qquad {\rm for \ any} \ \xi\in(0,R).
\end{equation}
We can then write for $\xi\in(0,R)$
\begin{equation*}
\begin{split}
(f^m)''(\xi)+\frac{1}{2}\xi^{\sigma}f(\xi)^p&=\xi^{\sigma}f(\xi)^p\left[\frac{1}{2}+m\xi^{-\sigma}f^{m-p-1}(\xi)f''(\xi)\right]+m(m-1)f^{m-2}(\xi)(f'(\xi))^2\\
&\geq\xi^{\sigma}f(\xi)^p\left[\frac{1}{2}+m\xi^{-(1+\sigma)}f^{m-p-1}(\xi)\frac{\xi f''(\xi)}{f'(\xi)}\right]\\
&=\xi^{\sigma}f(\xi)^p\left[\frac{1}{2}+m\frac{Y}{Z}\frac{\xi f''(\xi)}{f'(\xi)}\right].
\end{split}
\end{equation*}
This together with \eqref{interm9} show that the term in brackets in the last line above has to be negative, and since $Y/Z\to0$, we further infer that
\begin{equation}\label{interm10}
\lim\limits_{\xi\to0}\frac{\xi f''(\xi)}{f'(\xi)}=-\infty.
\end{equation}
We deduce from \eqref{interm10} that, for any $K>0$, there exists $\xi(K)>0$ such that
$$
\frac{\xi f''(\xi)}{f'(\xi)}<-K, \qquad {\rm for \ any} \ \xi\in(0,\xi(K)),
$$
which leads to $(\xi^Kf'(\xi))'<0$, thus $\xi^Kf'(\xi)$ is decreasing on $(0,\xi(K))$. We obtain that $\liminf\limits_{\xi\to0}\xi^Kf'(\xi)>0$ for any $K>0$, which readily gives that in fact
$$
\lim\limits_{\xi\to0}\xi^Kf'(\xi)=+\infty, \qquad {\rm for \ any} \ K>0,
$$
and this is a contradiction to the integrability of $f'$ near $\xi=0$. 
\end{proof}
The local analysis is now completed and we can pass to the proof of Theorem \ref{th.1}.

\section{Proof of Theorem \ref{th.1}: existence for $N\geq2$}\label{sec.exist}

We perform a shooting technique on the two-dimensional manifold going out of $Q_1$ tangent to the plane spanned by the eigenvectors $e_2=(-1,N+\sigma,0)$ and $e_3=(1,0,N)$ corresponding to the eigenvalues $\lambda_2=\sigma+2$ and $\lambda_3=2$ of the matrix $M(Q_1)$ in Lemma \ref{lem.Q1} (with its respective changes for $N=2$ in Lemma \ref{lem.Q15N2}). We work with the system \eqref{systinf1} and notice that these eigenvectors lie on the invariant planes $\{w=0\}$, respectively $\{z=0\}$. Moreover, the trajectories of the two-dimensional manifold going out of $Q_1$ are tangent to the one-parameter family of curves given by
\begin{equation}\label{interm11}
z=Cw^{(\sigma+2)/2}, \qquad C\in(0,\infty).
\end{equation}
We explore the behavior of the limiting orbits of the manifold, contained in the invariant planes $\{w=0\}$, respectively $\{z=0\}$, in the following preparatory results.
\begin{proposition}\label{prop.lower}
There exists $C_{*}>0$ such that the orbits going out of $Q_1$ on the two-dimensional manifolds tangent to curves corresponding to $C\in(0,C_*)$ in \eqref{interm11} enter the critical point $P_0$.
\end{proposition}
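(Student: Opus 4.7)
The plan is a continuity argument starting from the limit orbit $\gamma_0$ corresponding to $C=0$ in \eqref{interm11}, which lies entirely in the invariant plane $\{Z=0\}$ of the system \eqref{PSsyst1}. I expect $\gamma_0$ to connect $Q_1$ to $P_0$ inside this plane, and by continuous dependence of the two-dimensional unstable manifold at $Q_1$ on the parameter $C$, the conclusion will propagate to orbits $\gamma_C$ with small $C>0$.

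First I would restrict \eqref{PSsyst1} to $\{Z=0\}$, obtaining a two-dimensional subsystem in $(X,Y)$ whose only finite critical points are $P_0$ and $P_1$. The orbit $\gamma_0$ enters this plane from $Q_1$ tangent to the projection of $e_3$, corresponding, in profile variables, to a solution of the reaction-free equation with $f(0)=K>0$ and $f'(0)=0$. Expanding this equation at $\xi=0$ gives $(f^m)''(0)=\alpha K/N>0$, so $f$ is strictly increasing on a right-neighborhood of the origin and $Y>0$ initially along $\gamma_0$. I would then show that $f'>0$ on the whole existence interval by a simple impossibility argument: at any hypothetical first critical point $\xi_1>0$ of $f$, the reaction-free equation reduces to $mf(\xi_1)^{m-1}f''(\xi_1)=\alpha f(\xi_1)>0$, which forces $\xi_1$ to be a local minimum and contradicts the preceding monotonicity. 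Consequently $\gamma_0$ is trapped in the region $\{X>0,\,Y>0\}$.

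Having confined $\gamma_0$, I would apply the Poincar\'e--Bendixson theorem inside the 2D subsystem. Among the finite critical points only $P_0$ lies in $\overline{\{X>0,\,Y>0\}}$, while $P_1$ has $Y<0$. For the critical points at infinity of the plane $\{Z=0\}$: the points $Q_3$ and (for $N\geq 3$) $Q_5$ have negative $Y$-coordinate and are excluded; $Q_2$ is an unstable node by Lemma \ref{lem.Q23} and therefore cannot be an $\omega$-limit; and a return to $Q_1$ is ruled out by the strict monotonicity of $f$ along $\gamma_0$, which forbids $\gamma_0$ from being homoclinic. Since $\{X>0,\,Y>0\}$ contains no finite critical point, it cannot contain a periodic orbit either, so Poincar\'e--Bendixson forces $\gamma_0$ to enter $P_0$.

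For the perturbative step, Lemma \ref{lem.P0} provides an open neighborhood $\mathcal{U}$ of $P_0$ in the full three-dimensional phase space contained in its basin of attraction. Since $\gamma_0$ enters $\mathcal{U}\cap\{Z=0\}$ at some finite value $\eta_0$ of the independent variable, and since the orbits $\gamma_C$ depend continuously on $C$ through the smoothness of the local unstable manifold at $Q_1$ together with standard continuous dependence of solutions to \eqref{PSsyst1} on initial data, there exists $C_{*}>0$ such that $\gamma_C$ enters $\mathcal{U}$ for every $C\in(0,C_{*})$ and hence converges to $P_0$. The most delicate point will be ensuring that $\gamma_0$ remains bounded in the 2D plane, i.e.\ ruling out finite-$\xi$ blow-up of the reaction-free profile; this is handled by the instability of $Q_2$ combined with the sign constraints $X,Y>0$ already established, which leave no remaining escape route at infinity.
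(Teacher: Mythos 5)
Your overall skeleton coincides with the paper's: isolate the $C=0$ limit orbit $\gamma_0$ in the invariant plane $\{Z=0\}$, show it joins $Q_1$ to $P_0$, and propagate to small $C>0$ by continuity using that $P_0$ is an attractor. Where you diverge is in how the planar connection is established. The paper works in the chart $(y,w)=(Y/X,1/X)$ of \eqref{systinf1}, where the reduced system \eqref{systz=0} has the explicit isocline $w=(my^2+(N-2)y)/(1-(\beta/\alpha)y)$; the outgoing orbit is trapped between $\{y=0\}$ and this isocline, which forces $0<y<\alpha/\beta$ (hence the barrier $0<Y<(\alpha/\beta)X$ of \eqref{barrier}) and monotone increase of $w=1/X$, so that $X\to0$, then $Y\to0$, identifying the endpoint as $P_0$ with no recourse to Poincar\'e--Bendixson. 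Your route via monotonicity of the reaction-free profile plus Poincar\'e--Bendixson is viable, and your computations at $\xi=0$ and at a hypothetical interior critical point of $f$ are correct, but two steps need to be made honest. First, forward boundedness of $\gamma_0$ in the $(X,Y)$ chart---which you rightly flag as the delicate point---is not delivered by ``instability of $Q_2$ plus sign constraints'': Poincar\'e--Bendixson requires a compact forward-invariant region up front, and an $\omega$-limit set on the Poincar\'e disc could a priori be a graphic touching the equator. The fix is elementary: on $\{Z=0,\ X\geq0,\ Y=1/N\}$ one has $\dot{Y}=-1/N^2-\beta/(\alpha N)<0$, so $Y<1/N$ for all forward time, and then $\dot{X}<0$ whenever $X>(m-1)/(2N)$, which bounds $X$; this is exactly the quantitative content the paper's isocline argument supplies. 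Second, ruling out a return to $Q_1$ ``by monotonicity of $f$'' is not a phase-space argument; the correct reason is that the stable manifold of $Q_1$ in $\{z=0\}$ is contained in $\{w=0\}$, i.e.\ in $\{X=\infty\}$, so no orbit from the finite part can enter it (and for $N=2$ one must invoke the saddle-node structure of Lemma \ref{lem.Q15N2}). A word should also be said to exclude a homoclinic loop at $P_0$, which is immediate since $P_0$ attracts on its center manifold by Lemma \ref{lem.P0}. With these repairs your proof closes; the paper's trapping-region argument reaches the same conclusion more directly and yields the estimate \eqref{barrier} that it reuses to identify the endpoint.
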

\begin{proof}
We analyze the limit of the manifold contained in the invariant plane $\{z=0\}$, which corresponds to the limit $C=0$ in \eqref{interm11}. This limit is the unique trajectory going out of $Q_1$ inside the invariant plane $\{z=0\}$, tangent to the eigenvector $e_3$. The system \eqref{systinf1} reduces in the plane $\{z=0\}$ to the system
\begin{equation}\label{systz=0}
\left\{\begin{array}{ll}\dot{y}=-(N-2)y+w-my^2-\frac{\beta}{\alpha}yw,\\ \dot{w}=w(2-(m-1)y).\end{array}\right.
\end{equation}
In this reduced system, $Q_1$ becomes a saddle point and the unique orbit going out of $Q_1$ is tangent to the eigenvector $e_3=(1,N)$, thus it enters the region $\{y>0\}$. Since the direction of the flow of the system \eqref{systz=0} on the line $\{y=0\}$ is given by the sign of $w>0$, it follows that this orbit will stay forever in the region $\{y>0\}$ of the plane, which also corresponds to $\{Y>0\}$ in the variables of \eqref{PSchange}. Consider now the isocline
\begin{equation}\label{isoz=0}
-(N-2)y+w-my^2-\frac{\beta}{\alpha}yw=0, \qquad {\rm or \ equivalently} \ w=\frac{my^2+(N-2)y}{1-(\beta/\alpha)y}.
\end{equation}
Since the orbit from $Q_1$ goes out with both $y$ and $w$ increasing, it is easy to see that it enters near $Q_1$ the region limited by the line $\{y=0\}$ and the isocline \eqref{isoz=0}. This region is plotted, together with the isoclines, in Figure \ref{fig2}, where the plus or minus signs indicate the monotonicity of the dependence $w=w(y)$ along the trajectories.

\begin{figure}[ht!]
  % Requires \usepackage{graphicx}
  \begin{center}
  \includegraphics[width=11cm,height=8cm]{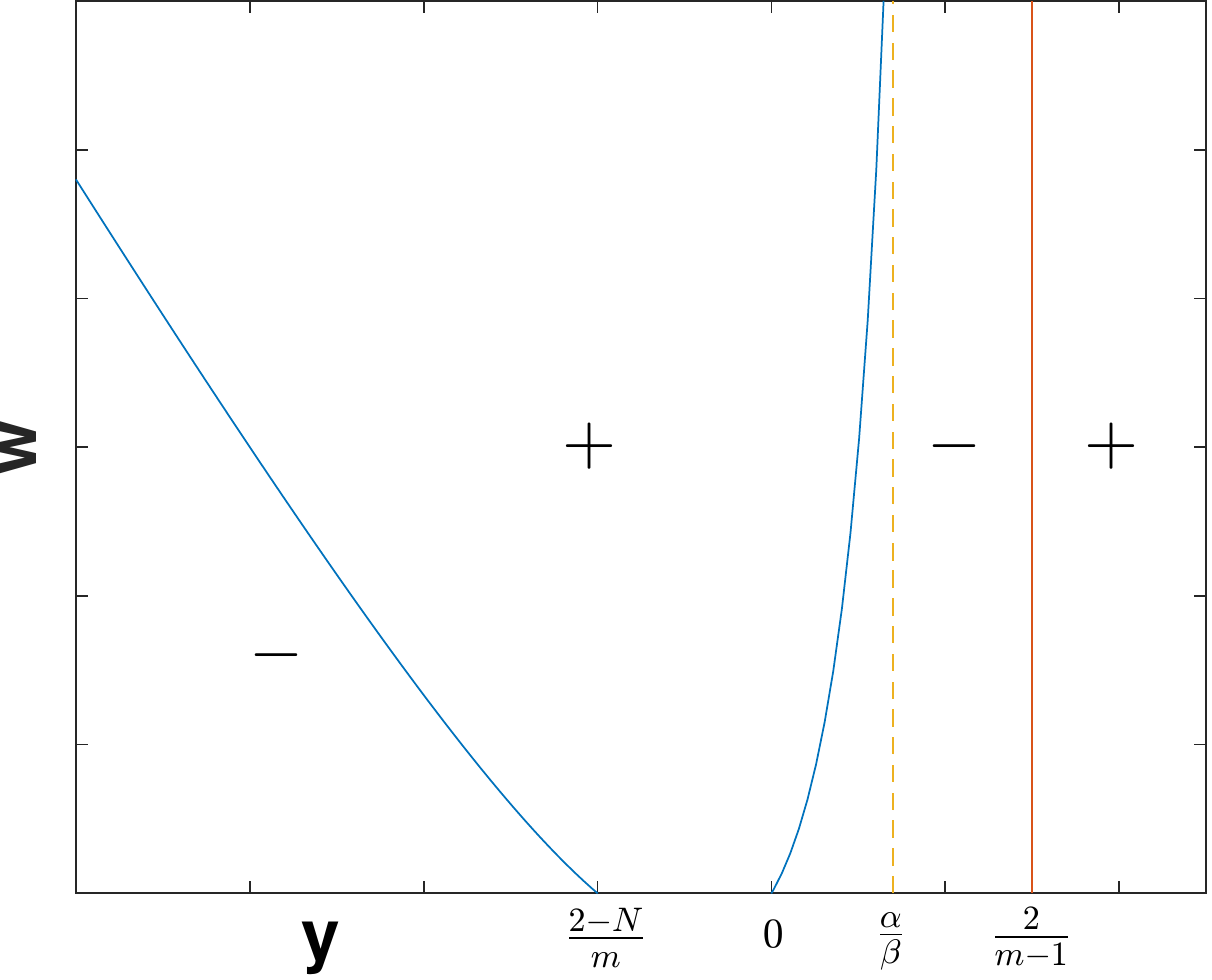}
  \end{center}
  \caption{The isoclines and monotonicity regions of the phase plane associated to the system \eqref{systz=0}}\label{fig2}
\end{figure}

Moreover, in this region we always have $w>0$, which leads to $y<\alpha/\beta<2/(m-1)$ and we infer that the orbit cannot cross the isocline \eqref{isoz=0} later and will thus stay forever inside this region. We further observe that
$$
2-(m-1)y>0, \qquad w>\frac{my^2+(N-2)y}{1-(\beta/\alpha)y},
$$
hence both $w$ and $y$ are increasing along the trajectory. We moreover notice that the estimate $y<\alpha/\beta$ implies, with respect to the initial variables given by \eqref{PSchange}, that
\begin{equation}\label{barrier}
0<Y<\frac{\alpha}{\beta}X. 
\end{equation}
We then get that the orbit cannot enter a limit cycle and has to end up in a critical point (with $w\to\infty$ in this case, which in initial variables means $X=1/w=0$) of the reduced system \eqref{systz=0}, which is also a critical point of the system \eqref{PSsyst1} with $X=0$. The estimate \eqref{barrier} proves that this critical point is the attractor $P_0$. Since $P_0$ is an attractor according to Lemma \ref{lem.P0}, by a standard continuity argument it follows that the orbits going out of $Q_1$ in the manifold tangent to the curves \eqref{interm11} also enter $P_0$ for sufficiently small parameters $C>0$, as claimed.
\end{proof}
\begin{proposition}\label{prop.higher}
There exists $C^{*}>0$ such that the orbits going out of $Q_1$ on the two-dimensional manifold tangent to curves corresponding to $C\in(C^*,+\infty)$ in \eqref{interm11} enter the critical point $Q_3$.
\end{proposition}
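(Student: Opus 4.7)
The approach mirrors that of Proposition \ref{prop.lower} at the opposite extreme. The plan is to analyze the limiting orbit at $C\to+\infty$ in \eqref{interm11}, which lies inside the invariant plane $\{w=0\}$ of \eqref{systinf1} (the part of the Poincaré equator corresponding to $X=+\infty$), and then to use a continuity argument to cover a right neighborhood of that limit. Restricting \eqref{systinf1} to this invariant plane produces the two-dimensional system
\begin{equation*}
\dot{y}=-(N-2)y-z-my^{2}, \qquad \dot{z}=z\bigl[(\sigma+2)+(p-m)y\bigr],
\end{equation*}
in which $Q_1$ is a saddle (for $N\geq 3$), and the unique orbit leaving it is tangent to the projected eigenvector $(-1,N+\sigma)$ associated to the positive eigenvalue $\sigma+2$, thus entering the open region $\{y<0,\,z>0\}$.

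I would then show that this limit orbit is trapped in $\{y<0,\,z>0\}$ and that both $|y|$ and $z$ tend to $+\infty$ along it. Trapping is immediate: on $\{y=0,\,z>0\}$ one has $\dot{y}=-z<0$, while $\{z=0\}$ is invariant. Inside the region, both factors of $\dot{z}$ are positive ($\sigma+2>0$ and $(p-m)y>0$ when $y<0$), so $z$ is strictly increasing. The reduced system has no critical point in the interior of this region, hence the orbit cannot stay bounded: if $z$ blew up while $y$ remained bounded, then $\dot{y}\sim-z\to-\infty$ would force $y\to-\infty$, a contradiction. Therefore $|y|,z\to+\infty$, and a dominant-balance computation $\dot{y}\sim-my^{2}$, $\dot{z}\sim(p-m)yz$ yields $dz/dy\sim\tfrac{m-p}{m}\,z/y$, whence $z\sim K|y|^{(m-p)/m}$ as $|y|\to+\infty$ for some $K>0$.

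To identify the limit point, I would switch to the chart adapted to $Q_3$ via the Poincaré transition identities $x=1/|y|$ and $z'=z/|y|$ (valid along the equator $w=0$). The growth estimate above then gives $x\to 0$ and $z'\sim K|y|^{-p/m}\to 0$, so the limit orbit converges precisely to the origin of the $Q_3$-chart, i.e.\ to $Q_3$ itself. Since $Q_3$ is a stable node by Lemma \ref{lem.Q23}, its basin of attraction is an open set in phase space; a continuity argument (fixing a small attracting neighborhood of $Q_3$ inside its own chart into which the $C=+\infty$ orbit enters, and invoking continuous dependence on the parameter for the nearby orbits of the two-dimensional unstable manifold) then produces a threshold $C^{*}>0$ such that every $C\in(C^{*},+\infty)$ gives an orbit of the manifold that enters $Q_3$.

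The main obstacle will be making rigorous the identification of the limit point as $Q_3$ rather than $Q_4$, which is the only other possible attractor candidate at infinity on the equator. This is where the explicit growth rate $z\sim K|y|^{(m-p)/m}$, valid precisely because $p<m$ (so that $z/|y|\to 0$), must be combined with the non-connection result of Lemma \ref{lem.Q4} to exclude accumulation at $Q_4$. The transfer of the continuous-dependence argument across charts along the equator is otherwise standard, but requires carrying out the final step in the chart of $Q_3$, where the stable-node structure is directly manifest.
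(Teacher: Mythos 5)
Your proposal follows essentially the same route as the paper: restrict \eqref{systinf1} to the invariant plane $\{w=0\}$, trap the unique orbit leaving the saddle $Q_1$ in $\{y<0,\,z>0\}$ (the paper uses the slightly finer region above the isocline $z=-(N-2)y-my^2$, which also makes $y$ monotone), show $|y|,z\to\infty$ with $z=o(|y|)$ by the same dominant balance $z\sim K|y|^{(m-p)/m}$, identify the limit as $Q_3$, and conclude by openness of the basin of the stable node together with continuity in the parameter $C$ (the paper makes the "neighborhood of $C=\infty$" rigorous by rewriting \eqref{interm11} as $w=C_1z^{2/(\sigma+2)}$ near $C_1=0$). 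The one step you skip is the exclusion of the intermediate alternative $y\to-\infty$ with $z$ bounded, which the paper rules out explicitly via the asymptotic $y\sim Ke^{mz/(m-p)z_0}$; your "therefore $|y|,z\to+\infty$" does not follow from only ruling out the case of bounded $y$, so this case should be added, although it is harmless for the final conclusion since there too $z/|y|\to0$ and the orbit would still converge to $Q_3$.
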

\begin{proof}
We now analyze the limit of the manifold from $Q_1$ contained in the invariant plane $\{w=0\}$, which corresponds to the limit $C\to\infty$ in \eqref{interm11}. The system \eqref{systinf1} reduces in the plane $\{w=0\}$ to the system
\begin{equation}\label{systw=0}
\left\{\begin{array}{ll}\dot{y}=-(N-2)y-my^2-z,\\ \dot{z}=z(\sigma+2-(m-p)y).\end{array}\right.
\end{equation}
In this reduced system, $Q_1$ becomes a saddle point and the unique orbit going out of $Q_1$ is tangent to the eigenvector $e_2=(-1,N+\sigma)$, thus it enters the region $\{y<0\}$ and it cannot cross the line $\{y=0\}$ later on, as the direction of the flow on this line is negative. Consider now the isocline
\begin{equation}\label{isow=0}
-(N-2)y-my^2-z=0,
\end{equation}
whose slope near $(y,z)=(0,0)$ is $z/y\sim-(N-2)$. Since the eigenvector tangent to the trajectory going out of $Q_1$ has the slope $z/y=-(N+\sigma)<-(N-2)$, we deduce that the orbit going out of $Q_1$ enters the region limited by the line $\{y=0\}$ and by the isocline \eqref{isow=0}. Moreover, the direction of the flow on the isocline \eqref{isow=0} is negative and the normal vector to it is $\overline{n}=(-(N-2)-2my,-1)$, hence the isocline cannot be crossed from the region $\{z>-(N-2)y-my^2\}$ towards the region interior to it. We thus find that the orbit going out of $Q_1$ will remain forever in the region $\{y<0,z>-(N-2)y-my^2\}$ of the plane associated to the system \eqref{systw=0}. For the reader's convention, we plot the above mentioned regions in Figure \ref{fig3}, where the plus and minus signs indicate the monotonicity regions for the dependence $z=z(y)$ in the phase plane associated to the system \eqref{systw=0}.

\begin{figure}[ht!]
  % Requires \usepackage{graphicx}
  \begin{center}
  \includegraphics[width=11cm,height=8cm]{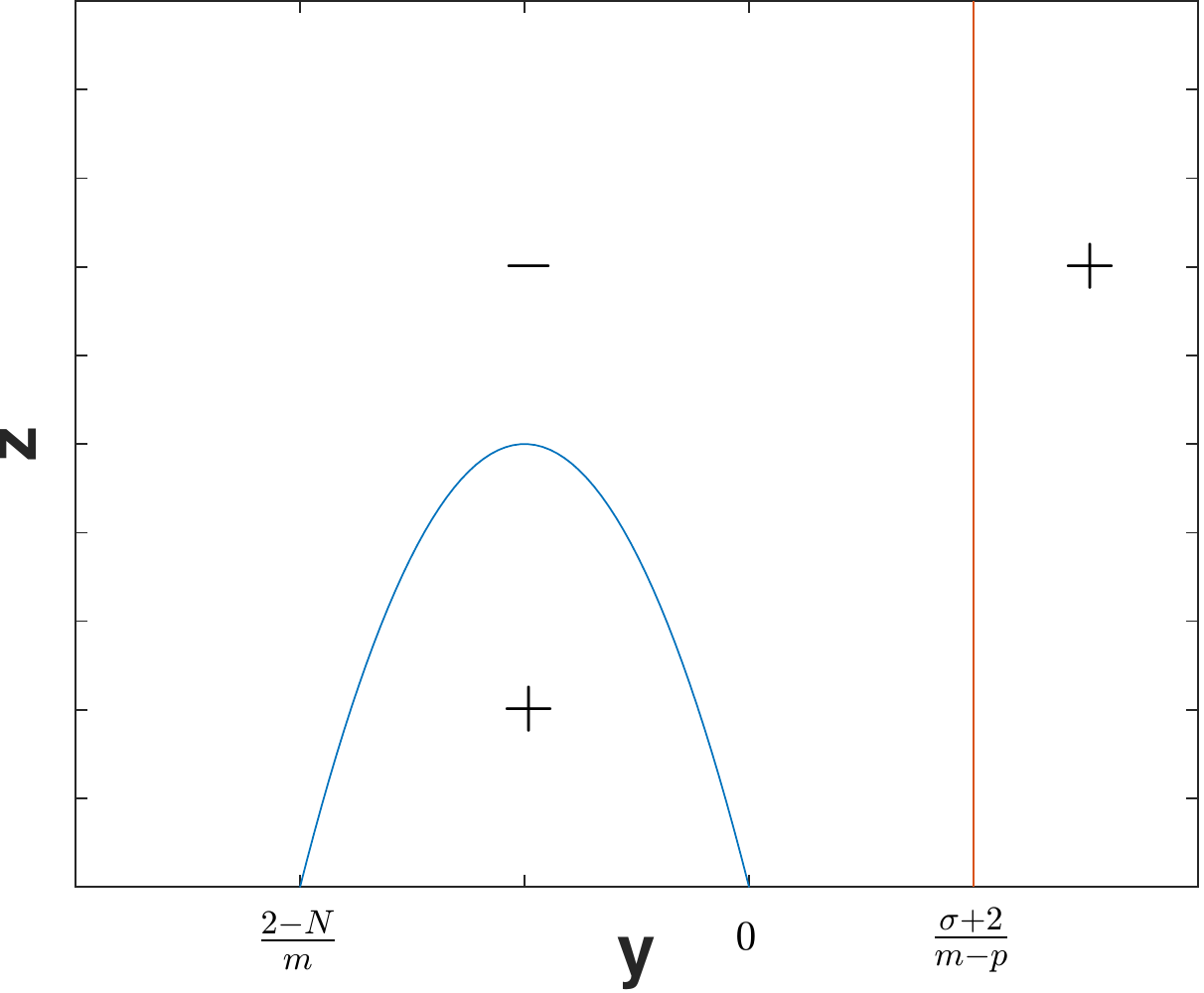}
  \end{center}
  \caption{The isoclines and monotonicity regions of the phase plane associated to the system \eqref{systw=0}}\label{fig3}
\end{figure}

We further observe that in this region, variable $y$ is decreasing and variable $z$ is increasing along the trajectories, thus they have a limit. We show below that $y\to-\infty$ and $z\to\infty$ along this trajectory. Indeed, if this is not the case, we would be in one of the following three situations:

$\bullet$ finite limits $y\to y_0\in(-\infty,0)$, $z\to z_0\in(0,\infty)$. Then $(y_0,z_0)$ must be a finite critical point of the system \eqref{systw=0} and there is no such point.

$\bullet$ if $y\to y_0\in(-\infty,0)$ and $z\to\infty$, we get 
$$
\frac{dy}{dz}=-\frac{(N-2)y+my^2+z}{z(\sigma+2-(m-p)y)}\to\frac{1}{\sigma+2-(m-p)y_0}>0
$$
and this is an obvious contradiction with the existence of such a vertical asymptote of the trajectory.

$\bullet$ if $y\to-\infty$ and $z\to z_0\in(0,\infty)$, we get for $y$ and $z$ very large in absolute value that 
$$
\frac{dy}{dz}\sim\frac{my}{(m-p)y_0}, \qquad {\rm hence} \qquad y\sim Ke^{mz/(m-p)z_0}
$$
and this is an obvious contradiction with the existence of such a horizontal asymptote of the trajectory.

We thus conclude that $y\to-\infty$ and $z\to\infty$ along this trajectory. We then get that for $y$ and $z$ very large in absolute value 
$$
\frac{dy}{dz}\sim\frac{m}{m-p}\frac{y}{z}+\frac{1}{(m-p)y},
$$
and we infer by integration that 
\begin{equation}\label{interm15}
y^2\sim Kz^{2m/(m-p)}-\frac{2z}{m+p}, \qquad K>0.
\end{equation}
Since $2m/(m-p)>1$, it follows that for $z$ large, the first term in \eqref{interm15} is dominating and we are left with 
$$
y\sim-Kz^{m/(m-p)}, \qquad K>0.
$$
Since $m/(m-p)>1$, we notice that $y/z\to-\infty$. Going back to the initial variables in \eqref{PSchange}, we deduce that 
$$
\frac{Y}{Z}\to-\infty, \qquad \frac{Y}{X}\to-\infty
$$
along this trajectory. We conclude that the orbit enters the stable node $Q_3$. Since $+\infty$ is not a number in order to use a continuity argument near it for the conclusion, in order to be rigorous we can express the curves in \eqref{interm11} also in the form
$$
w=\left(\frac{1}{C}z\right)^{2/(\sigma+2)}=C_1z^{2/(\sigma+2)}
$$
and take a neighborhood of $C_1=0$, allowing thus to use the same argument as at the end of the proof of Proposition \ref{prop.lower} here.
\end{proof}
Before proving the existence part in Theorem \ref{th.1}, we need one more preparatory lemma which will be very important both in the classification of the profiles and in the proof of the uniqueness.
\begin{lemma}\label{lem.cross}
The components $X$ and $Z$ are decreasing along the orbits going out of $Q_1$.
\end{lemma}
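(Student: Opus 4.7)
The plan is to introduce the two combinations
$$
F := (m-1)Y - 2X, \qquad G := (p-1)Y + \sigma X,
$$
so that the statement reduces to $F<0$ and $G<0$ along the orbit, since the first and third equations of \eqref{PSsyst1} read $\dot X = XF$ and $\dot Z = ZG$. A first observation I would make is that $G<0$ follows automatically from $F<0$: assuming $Y<2X/(m-1)$ and $p>1$, we obtain
$$
G \;<\; \frac{2(p-1)}{m-1}X + \sigma X \;=\; \frac{\sigma(m-1)+2(p-1)}{m-1}\,X \;<\; 0,
$$
thanks exactly to the defining inequality $\sigma(m-1)+2(p-1)<0$ of \eqref{exp.range}; the case $p=1$ is immediate because $G=\sigma X$. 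Thus the whole proof reduces to establishing $F<0$ along each orbit emanating from $Q_1$.

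For this I would argue by positive invariance of $\{F<0\}$. The initial step is to verify that $F<0$ holds near $Q_1$: writing $F = X[(m-1)y-2]$ with $y=Y/X$ and recalling that every orbit on the two-dimensional unstable manifold has $(y,z,w)\to(0,0,0)$ in the coordinates of \eqref{systinf1}, the bracket approaches $-2$ while $X\to+\infty$, so $F$ is not only negative but large in absolute value on an initial arc. To propagate this to the whole orbit I would compute $\dot F$ on the isocline $\{F=0\}$, i.e.\ on $Y=2X/(m-1)$. There $\dot X =0$, hence $\dot F = (m-1)\dot Y$; substituting $Y=2X/(m-1)$ into the middle equation of \eqref{PSsyst1} and using $\beta/\alpha=(m-p)/(\sigma+2)$, a direct calculation yields
$$
\dot Y \;=\; X\cdot\frac{\sigma(m-1)+2(p-1)}{(\sigma+2)(m-1)} \;-\; \frac{2X^{2}\bigl[2+N(m-1)\bigr]}{(m-1)^{2}} \;-\; XZ.
$$
All three terms on the right-hand side are non-positive on $\{X,Z\geq 0\}$, and the first is \emph{strictly} negative precisely because $\sigma(m-1)+2(p-1)<0$. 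Hence $\dot F<0$ on $\{F=0\}\cap\{X>0\}$, which forbids crossings from $\{F<0\}$ to $\{F\geq 0\}$. Combined with the initial step, this gives $F<0$ throughout the orbit, so $\dot X<0$, and then $\dot Z<0$ by the reduction in the first paragraph.

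The only delicate point is the sign check on the isocline, which amounts to verifying $1-2\beta/[\alpha(m-1)]<0$, equivalently $(\sigma+2)(m-1)<2(m-p)$, i.e.\ once again the characterizing inequality of the parameter range \eqref{exp.range}. This is arithmetic rather than conceptual, but it is precisely the place where the subcritical hypothesis enters decisively: if either this inequality or the comparison $2/(m-1)<|\sigma|/(p-1)$ used in the reduction were to fail, one could no longer exclude that $X$ or $Z$ grows along the flow, and the picture of orbits connecting $Q_1$ to $P_0$ or $Q_3$ drawn in Propositions \ref{prop.lower} and \ref{prop.higher} would collapse.
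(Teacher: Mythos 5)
Your proof is correct and follows essentially the same route as the paper: the paper likewise shows that the planes $(m-1)Y-2X=0$ and $(p-1)Y+\sigma X=0$ cannot be crossed in the positive $Y$-direction, with exactly your computation of the flow on the first plane (the paper's displayed $F(X,Z)$ is $(m-1)$ times your $\dot Y$ there), the sign coming from $\sigma(m-1)+2(p-1)<0$. The only cosmetic difference is that you deduce $(p-1)Y+\sigma X<0$ algebraically from $(m-1)Y-2X<0$ and $X>0$, whereas the paper verifies the transversality on the second plane directly; both are fine.
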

\begin{proof}
On the one hand, the direction of the flow of the system \eqref{PSsyst1} on the plane $(m-1)Y-2X=0$ is given by the sign of the expression
$$
F(X,Z)=-\frac{2(mN-N+2)}{m-1}X^2-(m-1)XZ+\frac{\sigma(m-1)+2(p-1)}{\sigma+2}X<0,
$$
since $\sigma(m-1)+2(p-1)<0$. On the other hand, the direction of the flow of the system \eqref{PSsyst1} on the plane $(p-1)Y+\sigma X$ is given by the sign of the expression
$$
G(X,Z)=\frac{\sigma[(N-2)(p-1)-m\sigma]}{p-1}X^2-(p-1)XZ+\frac{\sigma(m-1)+2(p-1)}{\sigma+2}X<0,
$$
since $\sigma<0$. It thus follows that none of these two planes can be crossed in the positive direction with respect to the variable $Y$. Since the orbits going out of $Q_1$ on the manifold tangent to the curves in \eqref{interm11} enter the half-space $\{Y<0\}$, they will never cross these two planes and thus will remain forever in the region where both $(m-1)Y-2X<0$ and $(p-1)Y+\sigma X<0$, which means $\dot{X}<0$, $\dot{Z}<0$ along the trajectories, completing the proof.
\end{proof}
\begin{proof}[Proof of Theorem \ref{th.1}, existence part]
We apply the "three-sets method". Making an abuse of language, let us identify the manifold going out of $Q_1$ with the one-parameter family of curves given by \eqref{interm11}. We then consider the following three sets
\begin{equation*}
\begin{split}
&\mathcal{A}=\{C\in(0,\infty): {\rm the \ orbit \ with \ parameter} \ C \ {\rm in \ \eqref{interm11} \ enters} \ P_0\},\\
&\mathcal{C}=\{C\in(0,\infty): {\rm the \ orbit \ with \ parameter} \ C \ {\rm in \ \eqref{interm11} \ enters} \ Q_3\},\\
&\mathcal{B}=\{C\in(0,\infty): {\rm the \ orbit \ with \ parameter} \ C \ {\rm in \ \eqref{interm11} \ does \ neither \ enter} \ P_0 \ {\rm nor} \ Q_3\}.
\end{split}
\end{equation*}
Propositions \ref{prop.lower} and \ref{prop.higher} show that the sets $\mathcal{A}$ and $\mathcal{C}$ are non-empty, while standard continuity arguments give that both $\mathcal{A}$ and $\mathcal{C}$ are open, since $P_0$ and $Q_3$ are attractors. It thus follows that $\mathcal{B}$ is non-empty. Take now $C\in\mathcal{B}$. The orbit tangent to the curve in \eqref{interm11} with parameter $C$ does not enter either $P_0$ or $Q_3$, and Lemma \ref{lem.cross} ensures that it cannot also enter a limit cycle since components $X$ and $Z$ are decreasing along it. It thus has to enter the \emph{only} critical point where this is allowed, that is $P_1$. We thus find at least one orbit connecting $Q_1$ to $P_1$, and Lemmas \ref{lem.Q1} and \ref{lem.P1} prove that the profiles contained in such orbits satisfy the statement of Theorem \ref{th.1}.
\end{proof}
Before passing to the uniqueness part, let us translate the previous results in terms of profiles.
\begin{corollary}\label{cor.shoot}
The profiles starting with the behavior \eqref{beh.Q1} for some $D>0$ have only two possibilities with respect to their monotonicity:

$\bullet$ they are either decreasing on their support $\{\xi>0: f(\xi)>0\}$

$\bullet$ or they decrease on some interval $\xi\in(0,\xi_0)$ with an absolute minimum $f(\xi_0)>0$ and then increase for $\xi\in(\xi_0,\infty)$.

Moreover, there exist $D_*>0$ and $D^*>0$ such that

$\bullet$ the profiles as in \eqref{beh.Q1} with $D\in(0,D_*)$ have compact support and a change of sign at a point $\xi_0\in(0,\infty)$, in the sense of Lemma \ref{lem.Q23}.

$\bullet$ the profiles as in \eqref{beh.Q1} with $D\in(D^*,\infty)$ have a unique positive minimum and afterwards they increase to infinity.
\end{corollary}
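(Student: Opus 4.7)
The plan is to combine the shooting argument on the two-dimensional unstable manifold of $Q_1$ with a direct translation of the parameter $C$ in \eqref{interm11} into the constant $D$ that appears in \eqref{beh.Q1}. The first step is to compute the $C$--$D$ correspondence: inserting \eqref{beh.Q1} into the change of variables \eqref{PSchange} and the substitution $w=1/X$, $z=Z/X$, and passing to the limit $\xi\to 0$, the relation $z\sim Cw^{(\sigma+2)/2}$ yields
\[
C=K_0\, D^{r}, \qquad r=\frac{\sigma(m-1)+2(p-1)}{2(m-p)},
\]
with $K_0>0$ an explicit constant. Under \eqref{exp.range} the numerator is negative and the denominator positive, so $r<0$ and the map $D\mapsto C$ is a decreasing bijection of $(0,\infty)$ onto itself.

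Next I address the monotonicity classification. By Lemma \ref{lem.Q1} the profile leaves $Q_1$ with $Y/Z\to -1/(N+\sigma)<0$, so $f$ starts out strictly decreasing. Extrema of $f$ correspond exactly to zeros of $Y$, and on the locus $\{Y=0\}$ the second equation of \eqref{PSsyst1} reduces to $\dot Y = X(1-Z)$. At the first zero $\eta_0$ of $Y$ one has $\dot Y(\eta_0)\geq 0$ (since $Y$ has just crossed from negative to zero); this forces $Z(\eta_0)\leq 1$ and makes the corresponding $\xi_0$ a minimum of $f$. If $Y$ were to vanish again at some later $\eta_1>\eta_0$ it would have to come back down from positive values, so $\dot Y(\eta_1)\leq 0$ and hence $Z(\eta_1)\geq 1$; but Lemma \ref{lem.cross} gives $Z(\eta_1)<Z(\eta_0)\leq 1$, a contradiction. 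Hence each profile has at most one interior critical point, necessarily a minimum, so the only two monotonicity patterns are precisely the ones listed in the statement.

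Finally, Propositions \ref{prop.lower} and \ref{prop.higher} single out the extremal ranges: for $C\in(0,C_*)$ the orbit enters $P_0$, whose profiles grow as \eqref{beh.P0} at infinity and hence, by the previous step, decrease and then increase without bound; for $C\in(C^*,\infty)$ the orbit enters $Q_3$, producing the sign change described in Lemma \ref{lem.Q23}. Pulling these ranges back through $C=K_0 D^{r}$ with $r<0$ reverses the inequalities, yielding $D\in(D^*,\infty)$ and $D\in(0,D_*)$ with $D^*=(C_*/K_0)^{1/r}$ and $D_*=(C^*/K_0)^{1/r}$, which is exactly the content of the corollary. The main obstacle is the monotonicity classification: without the strict decrease of $Z$ provided by Lemma \ref{lem.cross} one could not exclude profiles oscillating between several extrema, and it is precisely the combination of this decay with the sign of $\dot Y$ on $\{Y=0\}$ that closes the argument.
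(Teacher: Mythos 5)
Your proof is correct and follows essentially the same route as the paper: the flow on $\{Y=0\}$ has sign $X(1-Z)$, Lemma \ref{lem.cross} forbids a second crossing, and Propositions \ref{prop.lower}--\ref{prop.higher} are pulled back through the decreasing correspondence between $C$ and $D$. Your explicitly computed exponent $r=\bigl[\sigma(m-1)+2(p-1)\bigr]/\bigl[2(m-p)\bigr]$ is in fact a more careful version of the paper's asserted relation $f^{m-p}(\xi)\sim 1/C$, but both yield the same qualitative inversion of the parameter ranges.
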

\begin{proof}
The flow on the plane $\{Y=0\}$ is given by the sign of the expression $X(1-Z)$. Profiles as in \eqref{beh.Q1} are contained in orbits going out of $Q_1$ and entering the half-space $\{Y<0\}$ of the phase space associated to the system \eqref{PSsyst1}. Such orbits may either remain forever in the half-space $\{Y<0\}$ (thus, profiles are decreasing and going to either $Q_3$ or $P_1$, hence compactly supported) or cross the plane $\{Y=0\}$ from left to right, which has to be done with $Z<1$. The crossing point corresponds to a point of minimum $\xi_0>0$ for the profile. Since component $Z$ is always decreasing along the orbit by Lemma \ref{lem.cross}, we readily infer that such an orbit can cross $\{Y=0\}$ only once, thus remaining later in the half-space $\{Y>0\}$ until entering $P_0$, and this gives the second alternative. For the second statement, we remark that \eqref{interm11} is equivalent to
$$
f^{m-p}(\xi)\sim\frac{1}{C}, \qquad {\rm as} \ \xi\to0.
$$
We readily get the conclusion from the intervals in Propositions \ref{prop.lower} and \ref{prop.higher} and the local behaviors at $P_0$ and $Q_3$ in Lemmas \ref{lem.P0} and \ref{lem.Q23}.
\end{proof}

\section{Proof of Theorem \ref{th.1}: uniqueness for $N\geq2$}\label{sec.uniq}

We are now able to prove the uniqueness part of Theorem \ref{th.1}. This will be done working directly with Eq. \eqref{ODE} and using a \emph{sliding technique}, which leads to the construction of optimal barriers. The technical core of the argument is the following monotonicity result with respect to the value $f(0)$, whose proof is partly inspired by the one in \cite{YeYin}. We recall first that there are no profiles with $f(\xi)\to0$ as $\xi\to\infty$, according to the classification performed in Sections \ref{sec.local} and \ref{sec.infty}.
\begin{lemma}[Monotonicity Lemma]\label{lem.monot}
Let $f_1$, $f_2$ be two profiles as in \eqref{beh.Q1} and such that $f_1(0)=D_1<D_2=f_2(0)$. Let $\xi_{\rm min}\in(0,\infty)$ be such that either $f_1(\xi_{\rm min})=0$ or such that $f_1(\xi_{\rm min})>0$ is the positive minimum of $f_1$, according to the alternative in Corollary \ref{cor.shoot}. Then we have $f_1(\xi)<f_2(\xi)$ for any $\xi\in(0,\xi_{\rm min})$.
\end{lemma}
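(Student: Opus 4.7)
The plan is to argue by contradiction using a sliding/rescaling technique. Near $\xi=0$, the expansion \eqref{beh.Q1} with $D_1<D_2$ immediately gives $f_1(\xi)<f_2(\xi)$ on some right-neighborhood of the origin, so that if the conclusion failed there would exist a first touching point in $(0,\xi_{\min}]$ at which the two profiles meet. The idea is to embed $f_2$ into a one-parameter family of barriers and slide this family until the barrier touches $f_1$ from above.

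Concretely, I would introduce
\[
\tilde f_\lambda(\xi):=\lambda^{\alpha/\beta}\,f_2(\xi/\lambda),\qquad \lambda\geq 1,
\]
which geometrically represents the time-slices of the self-similar solution $u_2(x,t)=t^\alpha f_2(|x|t^{-\beta})$ at $t=\lambda^{1/\beta}$. Substituting this expression into \eqref{ODE}, using the ODE satisfied by $f_2$, and invoking the identity $2\beta-\alpha(m-1)=1$ (which is a direct consequence of \eqref{self.exp}), a short computation yields
\[
L[\tilde f_\lambda](\xi)=\lambda^{\alpha/\beta}\bigl(\lambda^{-1/\beta}-1\bigr)\bigl[\alpha\,f_2(\xi/\lambda)-\beta(\xi/\lambda)\,f_2'(\xi/\lambda)\bigr],
\]
where $L$ denotes the differential operator on the left-hand side of \eqref{ODE}. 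For $\lambda>1$ the pre-factor $\lambda^{-1/\beta}-1$ is strictly negative, while the bracket is strictly positive on the range where $f_2>0$ and $f_2'\leq 0$, a range covered by the monotonicity provided by Corollary \ref{cor.shoot}. Hence $\tilde f_\lambda$ is a \emph{strict supersolution} of \eqref{ODE} on its positivity set.

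With this barrier in hand, define $\lambda_0:=\inf\{\lambda\geq 1:\tilde f_\lambda\geq f_1 \text{ on }[0,\xi_{\min}]\}$, which is finite since for $\lambda$ sufficiently large both the amplitude and the support of $\tilde f_\lambda$ dominate those of $f_1$. The goal is to prove $\lambda_0=1$, because then $f_2=\tilde f_1\geq f_1$, and strict inequality will follow by a standard ODE-uniqueness argument at any interior equality point (the matching derivatives forced by the interior-minimum condition together with the regularity of \eqref{ODE} at any $\xi>0$ with $f>0$ would otherwise imply $f_1\equiv f_2$, contradicting $D_1<D_2$). Suppose by contradiction that $\lambda_0>1$; then $\tilde f_{\lambda_0}$ touches $f_1$ at some $\xi_\ast\in[0,\xi_{\min}]$. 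The endpoint $\xi_\ast=0$ is ruled out immediately since $\lambda_0^{\alpha/\beta}D_2=D_1$ would force $\lambda_0<1$. For an interior touching $\xi_\ast\in(0,\xi_{\min})$, the minimum conditions give $\tilde f_{\lambda_0}(\xi_\ast)=f_1(\xi_\ast)=:c>0$, $\tilde f_{\lambda_0}'(\xi_\ast)=f_1'(\xi_\ast)$ and $\tilde f_{\lambda_0}''(\xi_\ast)\geq f_1''(\xi_\ast)$; subtracting the two versions of \eqref{ODE} at $\xi_\ast$ makes every lower-order term cancel and leaves
\[
L[\tilde f_{\lambda_0}](\xi_\ast)=m\,c^{m-1}\bigl(\tilde f_{\lambda_0}''-f_1''\bigr)(\xi_\ast)\geq 0,
\]
in direct contradiction with the strict supersolution property proved above. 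The remaining case $\xi_\ast=\xi_{\min}$ is the main technical obstacle I foresee: when $f_1(\xi_{\min})=0$, both the barrier and the profile vanish with the same critical exponent $1/(m-1)$ prescribed by \eqref{beh.P1}, and ruling out this boundary touching requires comparing the leading-order coefficients of the two interface expansions together with the relation $\xi_{\min}=\lambda_0\,\xi_{2,\min}$ forced by $\tilde f_{\lambda_0}(\xi_{\min})=0$; when $f_1$ has a strictly positive minimum at $\xi_{\min}$, one has $\tilde f_{\lambda_0}(\xi_{\min})>0$ and the touching is reduced to the interior case already handled above.
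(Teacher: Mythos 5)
Your overall strategy (an optimal barrier obtained by sliding a rescaled copy of one profile over the other until it touches) is the same as the paper's, and your computation of the residual of $\tilde f_\lambda$ is correct; but the specific rescaling you chose creates two genuine gaps. First, your strict supersolution property hinges on the sign of the bracket $\alpha f_2(\eta)-\beta\eta f_2'(\eta)$, and you justify it only where $f_2'\leq 0$, claiming this range is ``covered by'' Corollary \ref{cor.shoot}. It is not: $\xi_{\rm min}$ is defined through $f_1$, and Corollary \ref{cor.shoot} leaves open the possibility that $f_2$ attains a positive minimum at some point strictly smaller than $\xi_{\rm min}$ and then increases --- ruling out exactly this kind of disorder is the content of the lemma you are proving, so you cannot assume it. (The positivity of the bracket can in fact be salvaged, since the region $\{Y<(\alpha/\beta)X\}$ of the phase space \eqref{PSsyst1} is forward invariant and contains all orbits issuing from $Q_1$, but that is an additional phase-plane argument you would have to supply.) The paper sidesteps this entirely by rescaling the \emph{smaller} profile, $g_k(\xi)=k^{-2m/(m-1)}g_1(k\xi)$: this scaling leaves every term of \eqref{ODE} invariant except the weighted reaction term, which acquires the factor $k^{L/(m-1)}$ with $L=\sigma(m-1)+2(p-1)<0$, so the strict supersolution inequality holds on the whole positivity set with no monotonicity assumption on either profile; only the decrease of $f_1$ on $[0,\xi_{\rm min}]$, which is guaranteed, is used to order the family $g_k$ in $k$ and to exclude contact at the intersection point.

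Second, you correctly flag the contact at the right endpoint $\xi=\xi_{\rm min}$ as the remaining obstacle but do not close it, and your dismissal of the positive-minimum sub-case is wrong: a one-sided contact at the right endpoint of the comparison interval does not yield the derivative matching $\tilde f_{\lambda_0}'(\xi_*)=f_1'(\xi_*)$ and $\tilde f_{\lambda_0}''(\xi_*)\geq f_1''(\xi_*)$ that your interior argument uses (one only gets $(\tilde f_{\lambda_0}-f_1)'(\xi_{\rm min})\leq 0$, which is perfectly compatible with the supersolution inequality), so it is not ``reduced to the interior case.'' The paper's set-up avoids the endpoint altogether: it assumes a \emph{first} intersection point $\xi_0\in(0,\xi_{\rm min})$ of $g_1$ and $g_2$, locates the contact point of the optimal barrier in $[0,\xi_0)$ (contact at $\xi_0$ is excluded because $g_{k_0}(\xi_0)=k_0^{-2m/(m-1)}g_1(k_0\xi_0)>g_1(\xi_0)=g_2(\xi_0)$ by the decrease of $g_1$ and $k_0<1$), kills interior tangencies with the supersolution sign, and excludes contact at $\xi=0$ by the refined expansions \eqref{beh.Q1} and \eqref{beh.deriv} --- a step which, to your credit, your choice of sliding the larger profile would make trivial, since $\tilde f_{\lambda_0}(0)=\lambda_0^{\alpha/\beta}f_2(0)>f_2(0)>f_1(0)$ for $\lambda_0>1$. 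As written, however, the proposal does not prove the lemma.
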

\begin{proof}
We work with the differential equation satisfied by $g=f^m$, that is
\begin{equation}\label{ODE2}
g''(\xi)+\frac{N-1}{\xi}g'(\xi)-\alpha g(\xi)^{1/m}+\beta\xi(g^{1/m})'(\xi)+\xi^{\sigma}g(\xi)^{p/m}=0.
\end{equation}
and set $g_1=f_1^m$, $g_2=f_2^m$. Assume for contradiction that there exists a first point of intersection between $g_1$ and $g_2$, namely $\xi_0\in(0,\xi_{\rm min})$, such that
$g_1(\xi)<g_2(\xi)$ for any $\xi\in[0,\xi_0)$ and $g_1(\xi_0)=g_2(\xi_0)$. Let us also simplify the notation by setting
$$
L=\sigma(m-1)+2(p-1)<0 \qquad {\rm for} \ 1<p<p_c(\sigma).
$$
We divide the rest of the proof into several steps.

\medskip

\noindent \textbf{Step 1. Rescaling and optimal barrier.} Introduce for any $k>0$ the following rescaling
\begin{equation}\label{resc}
f_k(\xi)=k^{-2/(m-1)}f_1(k\xi), \qquad g_k(\xi)=k^{-2m/(m-1)}g_1(k\xi).
\end{equation}
On the one hand, we find by direct calculation that the rescaled functions $g_k$ solve the differential equation
\begin{equation}\label{ODEresc}
g_k''(\xi)+\frac{N-1}{\xi}g_k'(\xi)-\alpha g_k(\xi)^{1/m}+\beta\xi(g_k^{1/m})'(\xi)+k^{L/(m-1)}\xi^{\sigma}g_k(\xi)^{p/m}=0.
\end{equation}
On the other hand, we notice that, if $0<k_1<k_2\leq1$ and $\xi\in[0,\xi_{\rm min}]$ we have $g_{k_1}(\xi)>g_{k_2}(\xi)$ and that
$$
\lim\limits_{k\to0}g_k(\xi)=+\infty, \qquad {\rm for \ any} \ \xi\in[0,\xi_{\rm min}).
$$
We thus infer that there exists $\overline{k}\in(0,1]$ such that $g_2(\xi)<g_{k}(\xi)$ for any $\xi\in[0,\xi_{\rm min}]$ and for any $k\in(0,\overline{k})$. This allows us to slide the rescaling parameter $k$ and consider the optimal one
\begin{equation}\label{opt.bar}
k_0=\sup\{\overline{k}>0: g_2(\xi)<g_k(\xi), \ {\rm for \ any} \ \xi\in[0,\xi_{\rm min}] \ {\rm and} \ k\in(0,\overline{k})\}.
\end{equation}
Since $g_1(\xi_0)=g_2(\xi_0)$ and $g_1(\xi)<g_2(\xi)$ for $\xi\in[0,\xi_0)$, it readily follows that $k_0<1$.

\medskip

\noindent \textbf{Step 2. No contact point at $\xi=\xi_0$.} We derive from the optimality of $k_0$ defined in \eqref{opt.bar} that the function $g_{k_0}$ should touch from above the function $g_2$, thus having a contact point $\xi_1\in[0,\xi_0]$. If $\xi_1=\xi_0$, this means $g_{k_0}(\xi_0)=g_2(\xi_0)=g_1(\xi_0)$, whence
$$
k_0^{-2m/(m-1)}g_{1}(k_0\xi_0)=g_1(\xi_0),
$$
which is a contradiction with the fact that $k_0<1$ and the decreasing monotonicity of $g_1$ on $[0,\xi_0]$. We thus get that $\xi_1<\xi_0$.

\medskip

\noindent \textbf{Step 3. No contact point in the interval $\xi\in(0,\xi_0)$.} Assume by contradiction that the touching point $\xi_1$ between the optimal barrier $g_{k_0}$ and $g_2$ belongs to the interval $(0,\xi_0)$. Thus, the two functions are tangent at $\xi=\xi_1$, which implies that $g_2(\xi_1)=g_{k_0}(\xi_1)$, $g_2'(\xi_1)=g_{k_0}'(\xi_1)$ and $g_{k_0}''(\xi_1)\geq g_2''(\xi_1)$. But we can also compare the two second derivatives by subtracting them from the differential equations \eqref{ODEresc} (solved by $g_{k_0}$) and \eqref{ODE2} (solved by $g_2$) taking into account the equality of $g_2$ and $g_{k_0}$ and also of their first derivatives at $\xi=\xi_1$. We thus find
$$
g_{k_0}''(\xi_1)-g_2''(\xi_1)=\left(1-k_0^{L/(m-1)}\right)\xi_1^{\sigma}g_2(\xi_1)^{p/m}<0,
$$
since $k_0<1$ and $L<0$. We thus reach a contradiction, proving that there is no such contact point $\xi_1\in(0,\xi_0)$.

\medskip

\noindent \textbf{Step 4. No contact point at $\xi=0$.} Assume for contradiction that the contact point $\xi_1=0$. It then follows that $g_{k_0}(0)=g_2(0)$ and $g_{k_0}(\xi)>g_2(\xi)$ for any $\xi\in(0,\xi_0)$ (at least), which also implies that there is a right-neighborhood $(0,\delta)$ of the origin such that $g_{k_0}'(\xi)>g_2'(\xi)$ for $\xi\in(0,\delta)$. We will show that this situation is not possible and thus reach a contradiction by using as a decisive argument the precise behavior \eqref{beh.Q1} of both $f_1$ and $f_2$ near $\xi=0$. Indeed, we deduce from \eqref{beh.Q1} and the definition of $g_1$, $g_2$ and $g_{k_0}$ (given in \eqref{resc}) that
\begin{equation}\label{interm12}
g_{k_0}(\xi)\sim k_0^{-2m/(m-1)}\left[D_1-Kk_0^{\sigma+2}\xi^{\sigma+2}\right]^{m/(m-p)}, \ \ g_2(\xi)\sim\left[D_2-K\xi^{\sigma+2}\right]^{m/(m-p)},
\end{equation}
where
$$
K=\frac{m-p}{m(N+\sigma)(\sigma+2)}>0.
$$
We infer from \eqref{interm12} that the equality $g_{k_0}(0)=g_2(0)$ gives
\begin{equation}\label{interm13}
D_2^{1/(m-p)}=k_0^{-2/(m-1)}D_1^{1/(m-p)}.
\end{equation}
We now analyze the local behavior of the derivatives of $g_{k_0}$ and $g_2$ as $\xi\to0$. We obtain from the general local estimate for derivatives \eqref{beh.deriv} that
$$
g_{k_0}'(\xi)\sim-\frac{1}{N+\sigma}k_0^{\sigma+2-2m/(m-1)}\left[D_1-Kk_0^{\sigma+2}\xi^{\sigma+2}\right]^{p/(m-p)}\xi^{\sigma+1},
$$
and
$$
g_2'(\xi)\sim-\frac{1}{N+\sigma}\left[D_2-K\xi^{\sigma+2}\right]^{p/(m-p)}\xi^{\sigma+1}
$$
as $\xi\to0$. After simplifying common factors, the condition $g_{k_0}'(\xi)>g_2'(\xi)$ for $\xi\in(0,\delta)$ leads by taking limit as $\xi\to0$ and recalling \eqref{interm13} to
$$
-k_0^{\sigma+2-2m/(m-1)}D_1^{p/(m-p)}>-D_2^{p/(m-p)}=-k_0^{2p/(m-1)}D_1^{p/(m-p)},
$$
or equivalently
$$
k_0^{-L/(m-1)}>1,
$$
which is a contradiction since $k_0<1$ and $-L/(m-1)>0$. Thus, there cannot be any contact point, which contradicts the existence of the intersection point $\xi_0$, proving the monotonicity.
\end{proof}
We deduce from Lemma \ref{lem.monot} that two profiles cannot intersect while both are decreasing. We are now almost ready to complete the proof of the uniqueness part, but we still need one more preparatory lemma. Let us take $f$ to be a profile contained in an orbit connecting the critical points $Q_1$ and $P_1$ in the phase space, thus with local behavior given by \eqref{beh.Q1} at $\xi=0$ and with an interface according to the local behavior \eqref{beh.P1} at some $\xi_0\in(0,\infty)$.
\begin{lemma}\label{lem.super}
If we define for some $k\in(0,1)$
$$
U_k(x,t)=t^{\alpha}f_k(|x|t^{-\beta}), \qquad f_k(\xi)=k^{-2/(m-1)}f(k\xi), \qquad f(\xi) \ {\rm solution \ to} \ \eqref{ODE},
$$
then $U_k$ is a supersolution to Eq. \eqref{eq1}.
\end{lemma}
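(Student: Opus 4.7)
The plan is to reduce the claim to a scaling computation. Observe that $U_k$ can be rewritten in terms of the exact self-similar solution $V(x,t):=t^{\alpha}f(|x|t^{-\beta})$ to Eq.~\eqref{eq1} as
$$
U_k(x,t) = k^{-2/(m-1)} V(kx, t).
$$
Since $V$ is by construction a (weak) solution of \eqref{eq1}---this being precisely the weak interpretation enabled by the contact condition $(f^m)'(\xi_0)=0$ in \eqref{beh.interf}---everything follows from differentiating this identity and exploiting the scale invariance of the porous medium operator, in the same spirit as the rescaling used in Step~1 of the proof of Lemma~\ref{lem.monot}.

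The computation proceeds in two steps. First, the chain rule together with the arithmetic identity $-2m/(m-1)+2=-2/(m-1)$ (which encodes the well-known scaling symmetry $(u,x)\mapsto(k^{-2/(m-1)}u,kx)$ of the PME) gives
$$
\partial_t U_k(x,t)-\Delta U_k^m(x,t) = k^{-2/(m-1)}\bigl(V_t-\Delta V^m\bigr)(kx,t) = k^{-2/(m-1)}|kx|^{\sigma}V(kx,t)^p,
$$
using that $V$ solves \eqref{eq1}. Second, writing $|x|^{\sigma}=k^{-\sigma}|kx|^{\sigma}$, the reaction term becomes
$$
|x|^{\sigma}U_k^p(x,t) = k^{-2p/(m-1)-\sigma}|kx|^{\sigma}V(kx,t)^p = k^{-2/(m-1)-L/(m-1)}|kx|^{\sigma}V(kx,t)^p,
$$
where $L:=\sigma(m-1)+2(p-1)$ and the identity $-2p/(m-1)-\sigma=-2/(m-1)-L/(m-1)$ is just a rewriting of the definition of $L$, exactly the one exploited in Step~3 of Lemma~\ref{lem.monot}. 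Subtracting,
$$
\partial_t U_k-\Delta U_k^m-|x|^{\sigma}U_k^p = k^{-2/(m-1)}\bigl[1-k^{-L/(m-1)}\bigr]|kx|^{\sigma}V(kx,t)^p.
$$
Since $L<0$ in the range \eqref{exp.range}, the exponent $-L/(m-1)$ is strictly positive, so that $k^{-L/(m-1)}<1$ for $k\in(0,1)$ and the bracket is strictly positive. This delivers the supersolution inequality pointwise on the positivity set $\{U_k>0\}$.

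The only subtle point, and the one I expect to be the main technical obstacle, is the verification that this pointwise inequality promotes to a genuine \emph{weak} supersolution inequality across the free boundary $|x|=\xi_0 t^{\beta}/k$, where $U_k$ transitions from positive to zero. This is handled by the standard argument for porous medium type equations: the contact condition $(f^m)'(\xi_0)=0$ makes $U_k^m$ of class $C^1$ across the interface, so that integrating against smooth nonnegative test functions produces no spurious boundary contribution at the interface, and the pointwise inequality promotes to a weak supersolution inequality for Eq.~\eqref{eq1} in the usual distributional sense.
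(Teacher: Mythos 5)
Your proposal is correct and follows essentially the same route as the paper: a direct scaling computation showing that the residual $\partial_t U_k-\Delta U_k^m-|x|^{\sigma}U_k^p$ equals a positive multiple of $\xi^{\sigma}f_k^p$ (your expression $k^{-2/(m-1)}[1-k^{-L/(m-1)}]|kx|^{\sigma}V(kx,t)^p$ is exactly the paper's $t^{\alpha-1}\xi^{\sigma}f_k^p(k^{L/(m-1)}-1)$ after unwinding the rescaling), with positivity coming from $k<1$ and $L<0$. Your closing remark on the interface matches the paper's observation that the contact condition is preserved under the rescaling, so nothing is missing.
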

\begin{proof}
By direct calculation we find that
\begin{equation*}
\begin{split}
U_{k,t}-\Delta U_k^m-|x|^{\sigma}U_k^p&=t^{\alpha-1}\left[\alpha f_k-\beta\xi f_k'-(f_k^m)''-\frac{N-1}{\xi}(f_k^m)'-\xi^{\sigma}f_k^p\right]\\
&=t^{\alpha-1}\xi^{\sigma}f_k^p\left(k^{L/(m-1)}-1\right)>0,
\end{split}
\end{equation*}
since $k<1$ and $L<0$. The conclusion follows from the fact that the local behavior at $\xi=0$ and the contact condition at the interface point remain valid with the rescaling.
\end{proof}
We are now in a position to complete the proof of Theorem \ref{th.1} in dimension $N\geq2$.
\begin{proof}[Proof of Theorem \ref{th.1}, uniqueness part for $N\geq2$]
Assume for contradiction that there exist two profiles $f_1$ and $f_2$ with behavior as in \eqref{beh.Q1} and with interfaces at points $\xi_1\in(0,\infty)$, respectively $\xi_2\in(0,\infty)$ such that $\xi_1<\xi_2$. Since Lemma \ref{lem.cross} implies that both profiles $f_1$ and $f_2$ are decreasing on all their support, we infer from Lemma \ref{lem.monot} that the two profiles are totally ordered over the positive part of the smallest one, that is, $f_1(\xi)<f_2(\xi)$ for any $\xi\in(0,\xi_1)$. Consider now the same rescaling introduced in \eqref{resc} and define $k_0$ to be the optimal parameter as in \eqref{opt.bar}, $k_0<1$. According to the proof of Lemma \ref{lem.monot}, we already know that $g_{k_0}$ and $g_2$, thus also $f_{k_0}$ and $f_2$, cannot have a contact point at some $\xi\in[0,\xi_2)$. We get that the unique remaining possibility is that the contact point between $f_{k_0}$ and $f_2$ has to lie at their common edge of the support $\xi=\xi_2$. We thus have
$$
0=f_2(\xi_2)=f_{k_0}(\xi_2), \qquad 0<f_2(\xi)<f_{k_0}(\xi) \ {\rm for \ any} \ \xi\in[0,\xi_2).
$$
We go back to the time variable and construct the self-similar functions
$$
U_2(x,t)=t^{\alpha}f_2(|x|t^{-\beta}), \qquad U_{k_0}=t^{\alpha}f_{k_0}(|x|t^{-\beta}),
$$
thus $U_2$ is a solution to Eq. \eqref{eq1} and $U_{k_0}$ is a supersolution to Eq. \eqref{eq1} according to Lemma \ref{lem.super}. We next apply a technique of \emph{separation of supports} similar to the one used in \cite{IV10}: we give a small time delay to the bigger function in order to remove the contact, and then optimize the rescaling parameter if the two functions remain uniformly separated on all their supports. More precisely, since at $t=1$ we have a perfect identification between the function and its profile, we notice that
$$
U_2(x,1)\leq U_{k_0}(x,1)<U_{k_0}(x,1+\delta),
$$
for some small $\delta>0$ fixed, since we know that our self-similar solutions advance with time both in amplitude and support. We thus find that
$$
U_2(x,1)<k_0^{-2/(m-1)}(1+\delta)^{\alpha}f_1(|x|k_0(1+\delta)^{-\beta}), \qquad |x|\in[0,\xi_2],
$$
thus the two functions are strictly separated, since $[0,\xi_2]$ is a compact set. It then follows by continuity that there exists a better parameter $k_1\in(k_0,1)$ such that we still have
$$
U_2(x,1)<k_1^{-2/(m-1)}(1+\delta)^{\alpha}f_1(k_1|x|(1+\delta)^{-\beta})=U_{k_1}(x,1+\delta).
$$
Since $U_2$ is a solution and $U_{k_1}$ is a supersolution to Eq. \eqref{eq1} according to Lemma \ref{lem.super}, we infer that $U_2(x,t)<U_{k_1}(x,t+\delta)$ for any $t>1$. Indeed, if this would not hold true, the strong maximum principle (valid for Eq. \eqref{eq1} except near $x=0$) would imply that a first contact point between $U_1(x,t)$ and $U_{k_1}(x,t+\delta)$ would appear at some time $t_0>1$ and at $x=0$. But this is not possible specifically for this kind of functions, stemming from profiles behaving near $\xi=0$ as in \eqref{beh.Q1}, as it can be again seen from Step 4 in the proof of Lemma \ref{lem.monot}. We thus get
$$
t^{\alpha}f_2(|x|t^{-\beta})\leq(t+\delta)^{\alpha}k_1^{-2/(m-1)}f_1(k_1|x|(t+\delta)^{-\beta}),
$$
or equivalently
\begin{equation}\label{interm14}
f_2(\xi)\leq\left(\frac{t+\delta}{t}\right)^{\alpha}k_1^{-2/(m-1)}f_1\left(k_1\xi\left(\frac{t}{t+\delta}\right)^{\beta}\right),
\end{equation}
for any $t>1$. We obtain by passing to the limit as $t\to\infty$ in the right hand side of \eqref{interm14} that $f_2(\xi)\leq f_{k_1}(\xi)$ for any $\xi\in[0,\xi_2]$, contradicting the choice of $k_0$ as a supremum in \eqref{opt.bar}. This contradiction gives the uniqueness of the desired profile with interface, as stated.
\end{proof}

\section{Analysis in dimension $N=1$}\label{sec.dim1}

In this final and shorter section, we deal with dimension $N=1$, where some noticeable differences appear with respect to dimensions $N\geq2$. The local analysis of the critical points in the plane is completely similar, but significant novelties appear with respect to the analysis of the critical points $Q_1$ and $Q_5$ at infinity, since the sign of $1+\sigma$ comes into play for example in \eqref{beh.Q1}. Moreover, an inspection of the system \eqref{Poincare} shows that there appears a new critical point at infinity that we call $Q_6$, which is identified in the system \eqref{systinf1} (that is, in variables $(y,z,w)$) as
$$
Q_6=\left(\frac{\sigma+2}{m-p},-\frac{(\sigma+2)(m(\sigma+1)+p)}{(m-p)^2},0\right),
$$
provided that $m(\sigma+1)+p<0$, a fact that is possible to hold true when $\sigma<-1$. Moreover, the critical point $Q_5$ lies now in the half-space $\{y>0\}$ and its local behavior changes. We gather these novelties with respect to dimensions $N\geq2$ in the following statement:
\begin{lemma}\label{lem.infty1}
\begin{enumerate}
\item In dimension $N=1$, the critical point $Q_1$ is an unstable node. The orbits going out of it contain profiles $f(\xi)$ such that $f(0)=D>0$ and if $\sigma>-1$ they can have any possible derivative $f'(0)\in\real$.
\item If $m(\sigma+1)+p\geq0$, the critical point $Q_5$ has a two-dimensional unstable manifold and a one-dimensional stable manifold. The orbits going out of it on the unstable manifold contain profiles with local behavior $f(\xi)\sim K\xi^{1/m}$ as $\xi\to0$, with $K>0$ free constant, and the unique orbit entering it lies in the infinity of the phase space.
\item If $m(\sigma+1)+p<0$, there are no orbits connecting $Q_5$ with the interior of the phase space. The critical point $Q_6$ (which exists only in this case) has a two-dimensional unstable manifold and a one-dimensional stable manifold. The orbits going out of it on the unstable manifold contain profiles with local behavior $f(\xi)\sim K\xi^{(\sigma+2)/(m-p)}$ as $\xi\to0$, with $K>0$ free constant, and the unique orbit entering it lies in the infinity of the phase space.
\end{enumerate}
\end{lemma}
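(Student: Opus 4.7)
The plan is to follow the framework of Lemmas \ref{lem.Q1}, \ref{lem.Q5}, and \ref{lem.Q15N2} but taking carefully into account the effect of setting $N=1$ in the system \eqref{systinf1}, which both eliminates the linear term $-(N-2)y$ in the equation for $\dot{y}$ and shifts $Q_5$ into the half-space $\{y>0\}$. The first step is to write the linearization of \eqref{systinf1} at the origin $Q_1$ with $N=1$: the matrix becomes upper triangular with diagonal entries $1$, $\sigma+2$, $2$, all positive, so $Q_1$ is an unstable node with a three-dimensional unstable manifold. Any orbit going out of $Q_1$ has $X\to\infty$ together with $y=Y/X\to 0$ and $z=Z/X\to 0$, which forces $f(\xi)\to D>0$ as $\xi\to 0$. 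The extra unstable direction compared to $N\geq 3$ corresponds precisely to the disappearance in $N=1$ of the singular coefficient $(N-1)/\xi$ in \eqref{ODE}; this makes the origin a regular point of the ODE and allows $f'(0)$ as a second free initial datum. A short asymptotic expansion argument based on $(f^m)''(\xi)\sim -D^p\xi^{\sigma}$ shows that $f'(0)$ is well-defined and can be prescribed arbitrarily in $\real$ exactly when $\sigma>-1$; otherwise, the $\xi^{\sigma+2}$ term in $f^m$ dominates the linear term and the derivative at the origin blows up.

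For $Q_5=(1/m,0,0)$ in the variables $(y,z,w)$ of \eqref{systinf1}, a direct computation yields the triangular linearization matrix with eigenvalues $-1$, $(m(\sigma+1)+p)/m$ and $(m+1)/m$. When $m(\sigma+1)+p\geq 0$, two eigenvalues are positive and one is negative, giving a two-dimensional unstable and one-dimensional stable manifold. The stable eigenvector for $\lambda=-1$ is $(1,0,0)$, lying in the invariant line $\{z=w=0\}$ which corresponds to infinity in the original variables. The profile behavior is extracted from $y\to 1/m$, i.e. $\xi f'/f\to 1/m$, integrating to $f(\xi)\sim K\xi^{1/m}$; the consistency check $Z/X=\xi^{\sigma+2}f^{p-m}\sim K^{p-m}\xi^{(m(\sigma+1)+p)/m}\to 0$ as $\xi\to 0$ succeeds precisely in this case.

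When $m(\sigma+1)+p<0$, the same consistency check fails: the exponent of $\xi$ in $Z/X$ becomes negative, so $z\to\infty$ along the putative asymptotic $f\sim K\xi^{1/m}$ as $\xi\to 0$, contradicting $z=0$ at $Q_5$. One then rules out the alternative limits $\xi\to\infty$ (which forces $X\to 0$, not infinity) and $\xi\to\xi_0$ finite (where all profile quantities remain bounded) by similar dimensional checks. This yields the claim that no orbit connecting $Q_5$ to the interior corresponds to a valid profile. The new critical point $Q_6$ is verified by direct substitution in \eqref{Poincare}; the coordinates $y_6=(\sigma+2)/(m-p)$, $z_6=-(\sigma+2)(m(\sigma+1)+p)/(m-p)^2$ give $z_6>0$ precisely when $m(\sigma+1)+p<0$. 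Its linearization has a block-triangular structure: the $w$-equation decouples with eigenvalue $2-(m-1)y_6=-L/(m-p)>0$ since $L=\sigma(m-1)+2(p-1)<0$, while the $(y,z)$ block has the form $\bigl(\begin{smallmatrix} A & -1 \\ (p-m)z_6 & 0\end{smallmatrix}\bigr)$ with determinant $(p-m)z_6<0$, hence one positive and one negative eigenvalue. This gives a two-dimensional unstable and one-dimensional stable manifold, with the stable direction lying inside $\{w=0\}$, i.e. at infinity. The profile asymptotic $y\to y_6$ yields $f(\xi)\sim K\xi^{(\sigma+2)/(m-p)}$; one closes the argument by checking that $X\sim K^{m-1}\xi^{L/(m-p)}\to\infty$ as $\xi\to 0$, which is consistent precisely because $L<0$.

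The main obstacle is the consistency argument in the case $m(\sigma+1)+p<0$ at $Q_5$: one must carefully enumerate the three possible regimes for $\xi$ (tending to $0$, to infinity, or to a finite positive value) and use dimensional analysis of the monomials $X$, $Y/X$, $Z/X$ to rule out each of them, since unlike at the other critical points we do not have a clean positive eigenvalue structure to produce a profile in the usual way. The rest of the proof is essentially a bookkeeping adaptation of the standard Poincaré-sphere linearization arguments already applied in Section \ref{sec.infty}.
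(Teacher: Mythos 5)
Your proposal is correct and follows the same overall framework as the paper's proof (linearize \eqref{systinf1} at each critical point on the Poincar\'e sphere, read the eigenvalue signs, and extract the profile behavior from the limit of $y=\xi f'/f$), but it takes a genuinely different route at two sub-steps. First, for the claim about $f'(0)$ at $Q_1$, the paper stays in the phase plane: it integrates $dy/dz$ near the origin to obtain $y\sim-z/(1+\sigma)+Cz^{1/(\sigma+2)}$, notes that for $N=1$ the exponent $1/(\sigma+2)=-(N-2)/(\sigma+2)$ no longer forces $C=0$, and integrates the $C\neq0$ branch to $f(\xi)\sim(D+C\xi)^{(\sigma+2)/(m-p)}$ with $C\in\real$ free; your argument via the ODE itself ($(f^m)''\sim-D^p\xi^{\sigma}$, whose double primitive has a genuine linear term at $\xi=0$ precisely when $\sigma>-1$) reaches the same dichotomy and is arguably more transparent about why $\sigma=-1$ is the threshold. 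Second, for the claim that $Q_5$ connects to nothing in the interior when $m(\sigma+1)+p<0$, the paper's argument is purely structural: the two-dimensional stable manifold is spanned by eigenvectors lying in the invariant plane $\{w=0\}$ and the one-dimensional unstable manifold lies in the invariant plane $\{z=0\}$, so no orbit with $z>0$ and $w>0$ can reach or leave $Q_5$. Your consistency check on $Z/X\sim\xi^{(m(\sigma+1)+p)/m}$ along $f\sim K\xi^{1/m}$ is valid and reaches the same conclusion, but it needs the extra enumeration of the three regimes for $\xi$ that you describe; the invariant-plane argument is shorter and avoids that case analysis. One small caution, which the paper shares: at the borderline $m(\sigma+1)+p=0$ the eigenvalue $(m(\sigma+1)+p)/m$ vanishes and $Q_5=Q_6$, so your sentence asserting ``two positive eigenvalues'' under the hypothesis $m(\sigma+1)+p\geq0$ is literally true only for strict inequality; the hyperbolic analysis does not cover the equality case without a center-manifold argument.
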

\begin{proof}
The first part is immediate by inspection of the matrix $M(Q_1)$ in the proof of Lemma \ref{lem.Q1}, for $N=1$. In particular, the eigenvalue $-(N-2)$ becomes now positive. The fact that all the profiles contained in orbits going out of $Q_1$ have $f(0)=D>0$ is proved in the same way as in Lemma \ref{lem.Q1}. For $\sigma>-1$, we again get as in the proof of Lemma \ref{lem.Q1} that
$$
y\sim-\frac{z}{N+\sigma}+Cz^{1/(\sigma+2)},
$$
but now we can no longer neglect the second term and force $C=0$. Indeed, for $C=0$ we get a two-dimensional manifold containing profiles with the same behavior \eqref{beh.Q1}, while for $C\neq0$ we get
$$
y\sim Cz^{1/(\sigma+2)}, \ \ {\rm or \ equivalently} \ \ \frac{\xi f'(\xi)}{f(\xi)}\sim C\xi f(\xi)^{(p-m)/(\sigma+2)},
$$
which leads by integration to the local behavior
$$
f(\xi)\sim(D+C\xi)^{(\sigma+2)/(m-p)}, \qquad D>0, \ C\in\real \ {\rm free \ constants}.
$$
With respect to $Q_5$, we have
$$
M(Q_5)=\left(
         \begin{array}{ccc}
           -1 & -1 & 1-\frac{\beta}{m\alpha} \\
           0 & \frac{m(\sigma+1)+p}{m} & 0 \\
           0 & 0 & \frac{m+1}{m} \\
         \end{array}
       \right),
$$
hence, if $m(\sigma+1)+p>0$, the matrix $M(Q_5)$ has two positive eigenvalues and the orbits going out of it on the two-dimensional stable manifold have $y\sim 1/m$, which easily leads to the local behavior $f(\xi)\sim K\xi^{1/m}$ as $\xi\to0$. If $m(\sigma+1)+p<0$ we have a two-dimensional stable manifold and a one-dimensional unstable manifold, but by computing the eigenvectors of $M(Q_5)$ it is easy to show that the former is totally included in the invariant plane $\{w=0\}$, while the latter is included in the invariant plane $\{z=0\}$ of the system \eqref{systinf1}, thus no interesting orbits appear. Finally, if $m(\sigma+1)+p<0$, the new critical point $Q_6$ appears, and the linearization of the system \eqref{systinf1} in a neighborhood of it has the matrix
$$
M(Q_6)=\left(
         \begin{array}{ccc}
           1-\frac{2m(\sigma+2)}{m-p} & -1 & 0 \\
           \frac{(\sigma+2)(m(\sigma+1)+p)}{m-p} & 0 & 0 \\
           0 & 0 & -\frac{\sigma(m-1)+2(p-1)}{m-p} \\
         \end{array}
       \right)
$$
and it is easy to check that it has always two positive eigenvalues (one of the first two and the third). The orbits going out of it into the interior of the phase space have $y\sim(\sigma+2)/(m-p)$, which readily leads to the local behavior
\begin{equation}\label{beh.Q6}
f(\xi)\sim K\xi^{(\sigma+2)/(m-p)}, \qquad {\rm as} \ \xi\to0, \ \ K>0 \ {\rm free \ constant}.
\end{equation}
Notice that the critical points $Q_5$ and $Q_6$ coincide when exactly $m(\sigma+1)+p=0$, and that the profiles going out of them do not satisfy the contact condition $(f^m)'(0)=0$ at the interface point $\xi=0$.
\end{proof}

\noindent \textbf{Remark.} If $-2<\sigma<-1$ and $m(\sigma+1)+p<0$, there exists an explicit profile on an orbit connecting $Q_6$ to $P_0$, namely
$$
f(\xi)=K_0\xi^{(\sigma+2)/(m-p)}, \qquad K_0=\left[\frac{-(m-p)^2}{m(\sigma+2)(m(\sigma+1)+p)}\right]^{1/(m-p)},
$$
which is also a stationary solution (except at $x=0$) to Eq. \eqref{eq1}, since the time variable cancels out in \eqref{SSS}.

\medskip

\noindent In view of Lemma \ref{lem.infty1}, we split the global analysis for $N=1$ into three cases.

$\bullet$ \textbf{Case $N=1$ and $-1<\sigma<0$.} By an inspection of the proofs in the previous sections, we readily notice that Theorem \ref{th.1} remains true and with exactly the same proof, as stated.

$\bullet$ \textbf{Case $N=1$ and $-2<\sigma<-1$.} In this case, we still get the local behavior near $Q_1$ as
$$
y\sim-\frac{z}{1+\sigma}+Cz^{1/(\sigma+2)}
$$
but since $\sigma+2<1$, the first part dominates near the origin, thus \emph{all the profiles} going out of $Q_1$ behave as in \eqref{beh.Q1}, noticing that in this case they start in an increasing way near $\xi=0$ due to the change of sign of $1+\sigma$. The shooting performed in Section \ref{sec.exist} is completely similar, thus there exist orbits connecting $Q_1$ and $P_1$ and containing self-similar profiles with interface. However, the proof of the uniqueness is no longer valid (since it requires decreasing profiles) and it is in fact likely that there are infinitely many profiles in such orbits.

$\bullet$ \textbf{Case $N=1$ and $\sigma=-1$.} In this case, since $1+\sigma=0$, the local behavior near $Q_1$ is given by
$$
\frac{dy}{dz}\sim\frac{y}{z}-1,
$$
leading to a local behavior described by
$$
Y\sim-Z\ln\,\frac{Z}{X}+KZ, \qquad K>0,
$$
and the first term dominates near the origin, thus we cannot differentiate between the profiles going out of $Q_1$. Existence is again ensured by the shooting as in Section \ref{sec.exist}, while uniqueness is again not likely to be true.

We close the paper by letting open the possibility of finding a difference at the level of the second order of expansion for the profiles in these last cases.

\bigskip

\noindent \textbf{Acknowledgements} R. I. and A. S. are partially supported by the Spanish project PID2020-115273GB-I00. A. I. M. is partially supported by the Spanish project RTI2018-098743-B-100. The authors want to thank Philippe Lauren\ced{c}ot for pointing out some valuable comments and references.

\bibliographystyle{plain}

\end{document}